\newcommand{\lab}[1]{\label{#1}}                
\numberwithin{equation}{section}
\newcommand{\pr}{{\bf P}}
\newcommand{\remove}[1]{}
\begin{document}
\newtheorem{theorem}{Theorem}[section]
\newtheorem{lemma}[theorem]{Lemma}
\newtheorem{sublemma}[theorem]{Sub-lemma}
\newtheorem{definition}[theorem]{Definition}
\newtheorem{conjecture}[theorem]{Conjecture}
\newtheorem{proposition}[theorem]{Proposition}
\newtheorem{claim}[theorem]{Claim}
\newtheorem{algorithm}[theorem]{Algorithm}
\newtheorem{corollary}[theorem]{Corollary}
\newtheorem{observation}[theorem]{Observation}
\newtheorem{property}[theorem]{Property}
\newtheorem{problem}[theorem]{Open Problem}
\newcommand{\R}{{\mathbb R}}
\newcommand{\N}{{\mathbb N}}
\newcommand{\Z}{{\mathbb Z}}
\newcommand\eps{\varepsilon}
\newcommand{\E}{\mathbb E}
\newcommand{\Prob}{\mathbb{P}}
\newcommand{\pl}{\textrm{C}}
\newcommand{\dang}{\textrm{dang}}
\renewcommand{\labelenumi}{(\roman{enumi})}
\newcommand{\bc}{\bar c}
\newcommand{\cal}[1]{\mathcal{#1}}
\newcommand{\G}{{\cal G}}
\newcommand{\Gnd}{\G_{n,d}}
\newcommand{\Gnp}{\G(n,p)}
\newcommand{\Hc}{{\cal H}}
\renewcommand{\P}{{\cal P}}
\newcommand{\la}{\lambda}
\newcommand{\floor}[1]{\lfloor #1 \rfloor}

\newcommand{\bel}[1]{\be\lab{#1}}
\newcommand{\ee}{\end{equation}}
\newcommand{\be}{\begin{equation}}
 \newcommand\eqn[1]{(\ref{#1})}
 \newcommand{\ex}{\E}
\newcommand{\bean}{\begin{eqnarray*}}
\newcommand{\eean}{\end{eqnarray*}}

\newcommand{\aas}{{a.a.s.}}
\newcommand{\wO}{\widetilde O}
\newcommand{\accessconst}{\gammaconst}
\newcommand{\gammaconst}{9}
\newcommand{\oldiii}{(iii)[[[*** this will be (iv) ***]]]}
\newcommand{\newiii}{[[[*** New part (iii) ***]]]}

\newcommand{\Aconst}{a} 
\newcommand{\Bconst}{b} 
\newcommand{\hatU}{\widehat U}
\newcommand{\Bin}{{\rm Bin}}
\newcommand{\tildeU}{{\widetilde U}}
\def\C{{\cal C}}
\newcommand{\proofend}{\hspace*{\fill}\mbox{$\Box$}\vspace{2ex}}

\title{Meyniel's conjecture holds for random $d$-regular graphs} 

\author{Pawe\l{} Pra\l{}at}
\address{Department of Mathematics, Ryerson University, Toronto, ON, Canada, M5B 2K3}
\thanks{The first author was supported by NSERC Discovery Grant RGPIN-2017-04402}
\email{\texttt{pralat@ryerson.ca}}

\author{Nicholas Wormald}
\thanks{The second author was supported by the Canada Research Chairs Program and NSERC, and in part by  Australian Laureate Fellowships grant FL120100125.}
\address{School of Mathematical Sciences, Monash University  VIC 3800, Australia}
\email{\tt nick.wormald@monash.edu}


\keywords{random graphs, vertex-pursuit games, Cops and Robbers}
\subjclass{05C80, 05C57}

\maketitle

\begin{abstract}
In the game of cops and robber, the cops try to capture a robber moving on the vertices of the graph. The minimum number of cops required to win on a given graph $G$ is called the cop number of $G$. The biggest open conjecture in this area is the one of Meyniel, which asserts that for some absolute constant $C$,  the cop number of every connected graph $G$ is at most $C \sqrt{|V(G)|}$. In a separate paper, we showed that Meyniel's conjecture holds asymptotically almost surely for the binomial random graph.  The result was obtained by showing that the conjecture holds for a general class of graphs with some specific expansion-type properties. In this paper, this deterministic result is used to show that the conjecture holds asymptotically almost surely for random $d$-regular graphs when  $d = d(n) \ge 3$.
\end{abstract}

\section{Introduction\label{intro}}

The game of \emph{Cops and Robbers}, introduced independently by Nowa\-kowski  and Winkler~\cite{nw} and Quilliot~\cite{q} more than  thirty years ago, is played on a fixed graph $G$. We will always assume that $G$ is undirected, simple, and finite.  There are two players, one being a set of $k$ \emph{cops}, where $k\ge 1$ is a fixed integer, and the other being the \emph{robber}.  The cops begin the game by occupying any set of $k$ vertices (in fact, for a connected $G$, their initial position does not matter). The robber then chooses a vertex, and the cops and robber move alternately. The players use edges to move from vertex to vertex: in a robber move, the robber alone moves, whilst in a cop move, each cop moves. More than one cop is allowed to occupy a vertex, and in a move any of the cops or robber may remain at  their current vertex. The players always know each other's current locations. If, after any move, at least one of the cops   occupies the same vertex as the robber, the game ends and the cops have won;  otherwise, that is, if the robber avoids this indefinitely, she wins. As placing a cop on each vertex guarantees that the cops win, we may define the \emph{cop number}, written $c(G)$, which is the minimum number of cops needed for the cop player to be able to force a win on $G$. The cop number was introduced by Aigner and Fromme~\cite{af}, who proved (among other things) that if $G$ is planar, then $c(G)\leq 3$. For more results on vertex pursuit games such as \emph{Cops and Robbers}, the reader is directed to the  surveys on the subject~\cite{al,ft,h} and the recent monographs~\cite{bn, book_BP}.

The most important  open problem in this area is Meyniel's conjecture (communicated by Frankl~\cite{f}). It states that $c(n) = O(\sqrt{n})$,  where $c(n)$ is the maximum of $c(G)$ over all $n$-vertex connected graphs.  If true, the estimate is best possible as one can construct a bipartite graph based on the finite projective plane with the cop number   $\Omega(\sqrt{n})$. Until recently, the best known upper bound of $O(n \log \log n / \log n)$ was given in~\cite{f}. It took 20 years to show that $c(n) = O(n/\log n)$ as proved in~\cite{eshan}. Today we know that the cop number is at most $n 2^{-(1+o(1))\sqrt{\log_2 n}}$ (which is still $n^{1-o(1)}$) for any connected graph on $n$ vertices (the result obtained independently by Lu and Peng~\cite{lp}, Scott and Sudakov~\cite{ss}, and Frieze, Krivelevich and Loh~\cite{fkl}).  Recalling the conjecture of Haj{\'o}s, which  Erd{\H o}s showed to be false for almost all graphs, to determine the truth of Meyniel's conjecture, it is natural to   check first whether random graphs provide easy counterexamples.  This paper shows that Meyniel's conjecture passes this test for random $d$-regular graphs: they satisfy the conjecture asymptotically almost surely whenever they are connected.

In a previous paper, we studied the binomial random graph $\Gnp$~\cite{PW_gnp}. Recall that this is a random graph with vertex set $[n]=\{1,2,\dots, n\}$ in which a pair of vertices appears as an edge with probability $p$, independently for each such a pair. The probability space of random $d$-regular graphs on $n$ vertices with uniform probability distribution we denote  $\mathcal{G}_{n,d}$. We say that an event in a probability space holds \emph{asymptotically almost surely} (\emph{a.a.s.}) if its probability tends to one as $n$ goes to infinity, with $n$ restricted to even integers in the case of $\mathcal{G}_{n,d}$ when $d$ is odd.

In~\cite{PW_gnp} we showed that Meyniel's conjecture holds \aas\ in $\Gnp$ provided that $np> (1/2+\eps) \log n$ for some $\eps > 0$. (Let us mention that it was shown earlier by \L{}uczak and the first author~\cite{lp2} that the cop number has a surprising ``zig-zag'' behaviour in $\Gnp$ with respect to $p$. See, for example,~\cite{PW_gnp} for more details on this model.) In this paper, we show that the conjecture holds \aas\ in $\Gnd$, provided that $d \ge 3$. Note that for $d\le 2$,   a random 2-regular graph is a.a.s.\ disconnected, and the conjecture does not apply. In any case, such graphs are trivial for current considerations.

\begin{theorem}\label{thm:main}
Suppose that $d = d(n) \ge 3$. Let $G=(V,E) \in \Gnd$. Then a.a.s.\
$$
c(G) = O(\sqrt{n}).
$$
\end{theorem}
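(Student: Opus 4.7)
The plan is to deduce Theorem~\ref{thm:main} directly from the deterministic theorem established in our companion paper~\cite{PW_gnp}, which asserts that any graph satisfying certain expansion-type conditions has cop number $O(\sqrt{n})$. So the real task is to show that $G \in \Gnd$ a.a.s.\ satisfies those conditions. Concretely, I would need to verify (a) a lower bound on vertex-expansion of all sets up to some suitable size, (b) a mild upper bound on expansion / co-degree type quantities guaranteeing that neighborhoods are not too concentrated, and (c) a bound on the number of short cycles or on the ``irregularity'' of neighborhoods at the scale $\sqrt{n}$. These are exactly the kinds of properties well-known to hold \aas\ in $\Gnd$, so the work is in showing they hold with the quantitative parameters required by the deterministic theorem.

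The principal technical tool will be the pairing (configuration) model of Bollob\'as: a uniformly random element of $\Gnd$ conditioned on being simple is contiguous to a uniformly random pairing of $dn$ points, and the probability of being simple is bounded away from $0$ for $d = O(1)$, while for larger $d$ the standard switching / enumeration estimates let us transfer \aas\ statements from the pairing model to $\Gnd$. I would split the analysis by the range of $d$: (i) $d$ bounded or slowly growing, where the pairing model is used directly and first-moment estimates on the number of ``bad'' small subsets (sets with too few external pair-ends, or sets containing too many short cycles) suffice; (ii) $d$ large, where one may either continue with the pairing-model first moment or invoke contiguity results with $\Gnp$ for $p = d/n$ and import the analogous estimates from~\cite{PW_gnp}. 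In each case, expansion of sets of size $s$ up to roughly $\sqrt{n}$ follows by bounding the expected number of sets $S$ with $|N(S) \setminus S| < (1-o(1))(d-1)|S|$: this expectation is a sum over $s$ of terms of the form $\binom{n}{s}\binom{n}{cs}$ times the pairing probability that all $d|S|$ ends land in $S \cup N(S)$, which is standard to show is $o(1)$ in the relevant range of $s$.

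The main obstacle, and where care is needed, is that the expansion statement must be strong enough to feed the deterministic theorem from~\cite{PW_gnp} uniformly for every $d = d(n) \ge 3$. For $d = 3$ the expansion constant is tight (neighborhoods grow only like a factor of $2$), and one must track the expansion threshold out to sets of size on the order of $n/\sqrt{n} = \sqrt{n}$, which is exactly the borderline where first-moment bounds are delicate. Here one typically has to exploit that a random $d$-regular graph has second eigenvalue $O(\sqrt{d})$ (Friedman's theorem, or for our purposes a direct pairing-model moment argument), giving expansion even for sets of linear size. For larger $d$ the expansion is easy; the subtle part is making a single argument that covers $d$ bounded and $d$ unbounded simultaneously, and I expect this to be handled by splitting into a constant-$d$ case treated via direct configuration-model computations and a growing-$d$ case treated either by the same computations with extra slack or via contiguity with $\Gnp$.

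Finally, once (a)--(c) are verified \aas, Theorem~\ref{thm:main} follows immediately from the deterministic result of~\cite{PW_gnp}: the cops' strategy there only uses that the graph expands well, has few short cycles locally, and has maximum degree controlled (automatic in $\Gnd$), and under those hypotheses the cop number is shown to be $O(\sqrt{n})$. No further game-theoretic analysis of cops and robbers is required in the present paper; all the combinatorial game theory is encapsulated in the deterministic lemma, and Theorem~\ref{thm:main} becomes a statement about \aas\ properties of $\Gnd$.
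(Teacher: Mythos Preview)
Your high-level plan — verify expansion-type hypotheses \aas\ and then invoke the deterministic theorem from~\cite{PW_gnp} — is exactly the paper's approach. But the proposal misidentifies what those hypotheses actually are and underestimates where the work lies, so as written it has real gaps.

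\textbf{The accessibility condition is missing.} Your conditions (a)--(c) (expansion lower bound, mild upper bound, few short cycles) do not capture the key hypothesis in the sparse regime. What is needed (Theorem~\ref{t:main}(ii) in the paper) is a $(t,c_1,c_2)$-\emph{accessibility} statement: for suitable $r,r'$ and every $v$, almost all vertices $w$ in $\bigcup_{a\in A} S(a,r')$ admit pairwise disjoint sets $W(w)\subseteq N(w,r+r'+1)$ of size $\Theta(\min\{d^{r+r'+1}, n/|U|\})$. This is a pointwise, tree-building requirement at scale $\sqrt{n}$, not a global expansion ratio or a cycle count, and it is by far the hardest part of the sparse case. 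The paper proves it via a four-phase exposure argument in the pairing model, including a sub-lemma that grows disjoint trees of prescribed height with controlled loss at each level. Nothing in your sketch addresses this, and a first-moment ``bad set'' count will not produce it.

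\textbf{Spectral methods are a wrong turn here.} Invoking Friedman's theorem gives a global edge-expansion bound, but the accessibility condition requires that from \emph{every} vertex the successive spheres grow almost regularly out to distance $\Theta(\log_d n)$, with fine control on how many vertices are lost to collisions at each level. The paper does not use eigenvalues at all; it uses breadth-first exposure in $\mathcal{P}_{n,d+1}$ and bounds the number of ``bad'' pairs stochastically by binomials. For $d$ as small as $3$ the spectral gap is too coarse to deliver this.

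\textbf{The large-$d$ case does not reduce to $\mathcal{G}(n,p)$ contiguity.} The Kim--Vu sandwiching you have in mind only covers $d \ll n^{1/3}/\log^2 n$. The paper therefore splits the dense range $d\ge \log^4 n$ into three intervals: (1) $\log^4 n \le d < n^{1/3}/\log^3 n$, handled by sandwiching with $\mathcal{G}(n,p)$ and Observation~\ref{obs:sandwich}; (2) $d\ge \sqrt{n}\log n$, handled deterministically by the elementary bound $c(G)\le \gamma(G)\le O(n\log n/d)=O(\sqrt{n})$; and (3) the middle interval $n^{1/3}/\log^3 n \le d < \sqrt{n}\log n$, handled by a direct switching argument bounding edges within and between sets. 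Your proposal covers only (1).

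In short: the architecture is right, but the concrete hypotheses to be verified are more specific and more delicate than ``expansion plus few short cycles,'' the main sparse-case difficulty (accessibility) is absent from your plan, and the dense case needs the three-interval split rather than a single appeal to contiguity.
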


These results for random graph models support Meyniel's conjecture, although there is currently a huge gap in the deterministic bounds: it is still not known whether there exists $\eps>0$ such that the cop number of connected graphs is $O(n^{1-\eps})$.  

We consider dense graphs in Section~\ref{s:dense} and sparse graphs in Section~\ref{s:sparse}. In each case we first recall the results from~\cite{PW_gnp} that show that the conjecture holds deterministically for a general class  of graphs  with some specific expansion-type properties. We then show that $G\in\Gnd$ is \aas\ contained in the general class. 

\section{ Proof of Theorem~\ref{thm:main}---dense case}\lab{s:dense}

In this section, we focus on dense random $d$-regular graphs, that is, graphs with average degree $d = d(n) \ge \log^4 n$. We will first cite a deterministic result that holds for a family of graphs with some specific expansion properties. After that we will show that dense random $d$-regular graphs \aas\ fall into this class of graphs and so the conjecture holds \aas\ for dense random $d$-regular graphs. We will need three different arguments for the three intervals for $d$, and we will treat them independently. Combining these observations will immediately show that Meyniel's conjecture holds for dense random $d$-regular graphs.
 
Before stating the result, we need some definitions. Let $S(v,r)$ denote the set of vertices whose distance from $v$ is precisely $r$, and $N(v,r)$ the set of vertices (``ball'') whose distance from $v$ is at most $r$. Also, $N[S]$ denotes $\bigcup_{v \in S} N(v,1)$, the closed neighbourhood of $S$,  and $N(S)=N[S] \setminus S$ denotes the (open) neighbourhood of $S$. All logarithms with no suffix are natural. 

\begin{theorem}{\bf{\cite{PW_gnp}}}\label{thm:general_dense_case}
Let $\G_n$ be a set of graphs and $d=d(n)\ge \log^3 n$. Suppose that for some positive constant $c$, for all $G_n\in\G_n$ the following properties hold.
\begin{enumerate}
\item Let $S \subseteq V(G_n)$ be any set of $s=|S|$ vertices, and let $r \in \N$. Then
$$
\left| \bigcup_{v \in S} N(v,r) \right| \ge c \min\{s d^r, n \}.
$$
Moreover, if $s$ and $r$ are such that $s d^r < n / \log n$, then
$$
\left| \bigcup_{v \in S} N(v,r) \right| \sim s d^r.
$$
\item Let $v \in V(G_n)$, and let $r \in \N$ be such that $\sqrt{n} < d^{r+1} \le \sqrt{n} \log n$. Then there exists a family 
$$
\Big\{W(u) \subseteq S(u,r+1) : u \in S(v,r) \Big\}
$$ 
of pairwise disjoint subsets such that, for each $ u \in S(v,r)$,
$$
|W(u)| \sim d^{r+1}.
$$
\end{enumerate}
Then $c(G_n) = O(\sqrt{n}).$
\end{theorem}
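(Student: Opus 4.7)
The plan is to design a cop strategy using $O(\sqrt{n})$ cops, exploiting the two expansion hypotheses in tandem. First I would fix the scale $r = r(n)$ by $\sqrt{n} < d^{r+1} \le \sqrt{n}\log n$, which is the range addressed by property~(2); at this scale property~(1) gives $|N(v,r)| \sim d^r = \Theta(\sqrt{n}/d)$ for every vertex $v$, and more generally $\bigl|\bigcup_{v\in S} N(v,r)\bigr| \ge c\min\{sd^r,n\}$ for every set $S$.

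I would then split the $O(\sqrt{n})$ cops into a \emph{covering team} and a \emph{sentry team}. The covering team is positioned so that every vertex of $G_n$ lies within distance $r$ of some covering cop; property~(1) shows that $O(\sqrt{n})$ cops suffice, via a greedy construction in which each newly added cop whose $r$-ball is not yet covered contributes $\Omega(d^r)$ new vertices while the uncovered region is still of size $\Omega(n)$, after which the $\min$-form of~(1) mops up the residue in $O(\sqrt{n})$ further picks. The sentry team is kept in reserve until the robber reveals herself.

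Once the robber commits to vertex $v$, some covering cop $c_0$ lies within distance $r$ of $v$ and begins a shortest-path pursuit. Applying property~(2) at $v$ yields, for each $u \in S(v,r)$, a set $W(u) \subseteq S(u,r+1)$ of size $\sim d^{r+1} \sim \sqrt{n}$, and these sets are pairwise disjoint across $u$. Since $|S(v,r)| = O(\sqrt{n}/d) = O(\sqrt{n})$, the sentry team is large enough to place at least one cop in every $W(u)$, and each such sentry is one step from the corresponding boundary vertex $u$, effectively sealing every length-$(r{+}1)$ exit from the ball around $v$. A potential function combining the chaser--robber distance with the number of unsealed boundary exits should then be shown to decrease on average as the chase progresses.

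The main obstacle will be the \emph{dynamic} maintenance of this configuration: as the robber moves from $v$ to a neighbour $v'$, both $S(v',r)$ and the family $\{W(u')\}$ shift, and the sentries have to be rerouted without recruiting new cops. The crux is to show that sentry re-routing keeps pace with the robber -- exploiting that properties~(1) and~(2) hold at \emph{every} vertex, so that the disjoint $W$-sets for adjacent robber positions overlap substantially -- and that the chaser strictly gains ground whenever no exit is available. Making this trade-off quantitative, and verifying that the total cop count remains $O(\sqrt{n})$ after absorbing the constants implicit in~(1) and~(2), is the heart of the argument.
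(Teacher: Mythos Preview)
This theorem is quoted from \cite{PW_gnp} and is not proved in the present paper, so there is no in-paper proof to compare against directly. Nevertheless, your sketch has two concrete gaps that would break the argument.

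\emph{The covering team is too large.} You claim $O(\sqrt n)$ cops can be placed so that every vertex lies within distance $r$ of one of them. But property~(i) gives $|N(v,r)|\sim d^{r}$, and since $d^{r+1}\le\sqrt n\,\log n$ we have $d^{r}\le \sqrt n\,\log n/d$. Covering $n$ vertices by balls of this size therefore needs at least $nd/(\sqrt n\,\log n)=d\sqrt n/\log n$ cops, and because $d\ge\log^{3}n$ this is $\gg\sqrt n$. Your greedy count is off by a factor of roughly $d/\log n$, so the covering team alone already exceeds the budget.

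\emph{The sentries are not one step away.} You write that a cop placed in $W(u)$ is ``one step from the corresponding boundary vertex $u$'', but property~(ii) says $W(u)\subseteq S(u,r{+}1)$: such a cop is at distance exactly $r{+}1$ from $u$, and hence at distance up to $2r{+}1$ from $v$. It needs $r{+}1$ moves to reach $u$, not one. Once this is corrected, your ``sealing every length-$(r{+}1)$ exit'' picture and the subsequent potential-function heuristic no longer hold as stated.

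The argument in \cite{PW_gnp} uses neither a covering team nor a single chasing cop. Instead one takes $C\sqrt n$ cops distributed uniformly at random, split into a few independent teams. With the robber at $v$, the pairwise disjoint sets $W(u)$ for $u\in S(v,r)$ each have size $\sim d^{r+1}\ge\sqrt n$, so with high probability every $W(u)$ contains a cop from the current team; these cops walk $r{+}1$ steps towards their targets, after which the entire sphere $S(v,r)$ is guarded and the robber is confined to a region of size $O(d^{r})=O(\sqrt n)$. Property~(i) is then used to control the expansion inside this smaller territory, and a fresh team repeats the trap at the next scale; after a bounded number of phases the robber is caught. The dynamic-maintenance difficulty you anticipate is sidestepped entirely by switching to a fresh, pre-positioned random team at each phase rather than rerouting the same sentries.
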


Note that condition (ii) holds vacuously if there is no $r \in \N$ such that $\sqrt{n} < d^{r+1} \le \sqrt{n} \log n$. More importantly, in fact, a slightly stronger result holds. Suppose that the robber plays on a graph $G=(E_1, V)$ but the cops play on a different graph, $H=(E_2, V)$, on the same vertex set. Again, the cops win if they occupy the vertex of the robber. We will use $c(G,H)$ for the counterpart of the cop number for this variant of the game. In~\cite{PW_gnp} it was shown that the same conclusion holds on this variant of the game provided that the appropriate upper and lower bounds in the hypotheses hold on the respective graphs. 

\begin{observation}[\cite{PW_gnp}]\label{obs:sandwich}
Let $\G_n$ and $\Hc_n$ be two sets of graphs and $d=d(n)\ge \log^3 n$. Suppose that for all $G_n\in\G_n$ and all $H_n\in\Hc_n$ we have the following:
\begin{enumerate}
\item for some positive constant $c$, conditions (i) and (ii) in the hypotheses of Theorem~\ref{thm:general_dense_case} are satisfied for $H_n$,
\item $G_n$ is $d$-regular.
\end{enumerate} 
Then $c(G_n, H_n) = O(\sqrt{n})$.
\end{observation}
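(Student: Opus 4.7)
The plan is to revisit the proof of Theorem~\ref{thm:general_dense_case} from~\cite{PW_gnp} and, without changing any of its structure, track carefully which role the underlying graph plays at each step, so that the single-graph argument can be split between $G_n$ and $H_n$. The key point is the following: in the cops' winning strategy the expansion-type hypotheses (i) and (ii) are invoked only to analyse the cops' own movements---how far they can reach in a given number of steps, how neighbourhoods intersect, and how the sets $W(u)\subseteq S(u,r+1)$ are distributed---whereas the role of the graph the robber moves on enters only through an upper bound on the number of vertices she can possibly occupy after $t$ steps. That latter bound is nothing more than $|N_{G_n}(v,t)|\le 1+d+d(d-1)+\cdots+d(d-1)^{t-1}=O(d^t)$, and it uses only $d$-regularity of $G_n$.

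Concretely, I would proceed as follows. First, read through the proof of Theorem~\ref{thm:general_dense_case} and partition every appearance of the underlying graph into two categories: (a) those statements used to describe where cops can be, where they can move, and how their positions cover territory; and (b) those statements used to bound the set of vertices the robber could currently be at. In category (a) I would replace every graph-theoretic quantity by its counterpart in $H_n$, so that every invocation of properties (i) and (ii) is applied to $H_n$, where by hypothesis they hold. In category (b) I would use $G_n$, invoking only that the ball of radius $t$ around any vertex has size $O(d^t)$, which follows from $d$-regularity alone. Since the strategy in Theorem~\ref{thm:general_dense_case} uses at most $O(\sqrt{n})$ cops and the calibration of parameters (the critical radius $r$ with $\sqrt{n}<d^{r+1}\le \sqrt{n}\log n$, the number of cops, and so on) depends only on $n$ and $d$, the counting arguments and union bounds in the original proof carry through unchanged.

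The main obstacle I expect is bookkeeping in the places where the strategy reasons hypothetically about ``all positions the robber might be in after $r$ moves''. One must confirm that each such set is treated as a subset of $N_{G_n}(\cdot,r)$ (yielding the weaker, merely multiplicative bound $O(d^r)$), not as something larger coming from expansion in $H_n$, and then check that this weaker upper bound still suffices to close every counting estimate in the original argument. Because the proof of Theorem~\ref{thm:general_dense_case} already balances cop coverage in $H_n$-style terms against a robber reach of the form $O(d^r)$ on a $d$-regular graph (the single graph in the statement is itself essentially $d$-regular), no slack is lost and the same $O(\sqrt{n})$ bound on the cop number is obtained, establishing $c(G_n,H_n)=O(\sqrt{n})$.
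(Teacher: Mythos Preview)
Your proposal is correct and matches the paper's own treatment: the paper does not give a self-contained proof of this observation but simply records (from~\cite{PW_gnp}) that the proof of Theorem~\ref{thm:general_dense_case} goes through verbatim once one uses the expansion lower bounds on the cops' graph $H_n$ and the trivial $O(d^r)$ upper bound on the robber's reach from $d$-regularity of $G_n$. Your write-up is a faithful expansion of that one-line justification.
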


\subsection{Interval 1: $\log^4 n \le d = d(n) < n^{1/3} / \log^3 n$}

It is known that if $d \gg \log n$ and $d\ll n^{1/3} / \log^2 n$, then there exists a coupling of $\Gnp$ with $p = \frac dn (1- O((\log n)/d)^{1/3})$, and the space $\Gnd$ of random $d$-regular graphs, such that a.a.s.\ $\Gnp$ is a subgraph of $\Gnd$~\cite{KimVu}. 

In~\cite{PW_gnp}  it was shown that the hypotheses of Theorem~\ref{thm:general_dense_case} are a.a.s.\ satisfied if we set  $\G_n=\G(n,p)$ and replace $d$ by $d_0:= p(n-1)$. Since we are only concerned with $r=O(\log n/\log \log n)$ and $d \ge \log^4 n$, 
$$
d_0^r = \big( p(n-1) \big)^r \sim d^r (1-(\log n/d)^{1/3})^r = d^r (1- O(r (\log n/d)^{1/3})) \sim d^r.
$$ 
It follows that the hypotheses of Theorem~\ref{thm:general_dense_case} are also satisfied a.a.s.\ with $\G_n=\G(n,p)$ using the ``true" value of $d$.  Hence by  Observation~\ref{obs:sandwich}, a.a.s.\ $c(G_n, H_n) = O(\sqrt{n})$ where $H_n$ is $\Gnd$ and $G_n$ is the coupled $\Gnp$ described above. With probability $1-o(1)$, $G_n\subseteq H_n$, in which case all the cops' moves are valid on the robber's graph, and thus $c(G_n, H_n) \le c(H_n)$. Theorem~\ref{thm:main} holds for the range considered in this case. 

\subsection{Interval 2: $d = d(n) \ge \sqrt{n} \log n$}

As usual, let us start with a few definitions. A \emph{dominating set} of a graph $G=(V,E)$ is a set $U \subseteq V$ such that every vertex $u \in V \setminus U$ has at least one neighbour in $U$. The \emph{domination number} of $G$, $\gamma(G)$, is the minimum cardinality of a dominating set in $G$. It is well known that for any graph $G$ on $n$ vertices with minimum degree $\delta = \delta(n) \ge 2$ we have  
$$
\gamma(G) \le \frac {1+\log(\delta+1)}{\delta+1} n.
$$
(See, for example,~\cite{AS}.) Hence, any $d$-regular graph with $d=d(n) \ge \sqrt{n} \log n$ has a dominating set of cardinality $O(n \log n / d) = O(\sqrt{n})$. Since $c(G) \le \gamma(G)$, Meyniel's conjecture holds for any such graph. Theorem~\ref{thm:main} holds for the range considered in this case. (In fact, the result is stronger as it holds deterministically.)

\subsection{Interval 3: $n^{1/3} / \log^3 n \le d = d(n) < \sqrt{n} \log n$}

We will verify that for this range of $d$, random $d$-regular graphs \aas\ satisfy the conditions (i) and (ii) in the hypotheses of Theorem~\ref{thm:general_dense_case}. In fact, only the condition (i) needs to be verified as the condition (ii) does not apply for this range of $d$, since $r\ge 1$. Moreover, as we will only need part (i) to be verified for $r = O(1)$, the following lemma will imply the result.

\begin{lemma}
Suppose that $n^{1/3} / \log^3 n \le d = d(n) < \sqrt{n} \log n$. Let $G=(V,E) \in \Gnd$. Then, there exists some positive constant $c$ such that the following properties hold \aas\
For any set $S \subseteq V$ of $s=|S| \le cn/d$ vertices 
\begin{equation}\label{eq:switching1}
\big| N[S] \big| \ge c s d.
\end{equation}
Moreover, if $s$ is such that $s d < n / \log n$, then
\begin{equation}\label{eq:switching2}
\big| N[S] \big| \sim s d.
\end{equation}
\end{lemma}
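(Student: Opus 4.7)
The plan is to apply the switching method directly in the space $\mathcal{G}_{n,d}$, since in this range of $d$ the convenient Kim--Vu sandwiching of $\mathcal{G}_{n,d}$ by binomial random graphs is unavailable, and the configuration model no longer produces a simple graph with non-vanishing probability. The goal is to show that for every fixed $s$ in the allowed range, the number of $d$-regular graphs on $[n]$ containing a size-$s$ set $S$ with ``small'' external neighbourhood is exponentially rare by a margin that beats the $\binom{n}{s}$ factor of a union bound.

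Fix $s$ and an $s$-set $S$, and let $N(s,t)$ denote the number of $d$-regular graphs on $[n]$ in which $|N(S)|=t$. I would design a switching that takes a graph counted by $N(s,t)$ to one counted by $N(s,t+1)$: pick an edge $ab$ with $a\in S$, $b\in N(S)$ and $a$ the unique $S$-neighbour of $b$, and an edge $xy$ with both endpoints outside $S\cup N(S)$, and replace these two edges by $ax$ and $by$, checking that no new multi-edge or loop is introduced. A standard forward/backward count then yields a ratio estimate for $N(s,t-1)/N(s,t)$; iterating from $t\approx sd$ downward implies that the probability a given $S$ has $|N[S]|\le(1-\delta)sd$ is at most $\exp(-\Omega(\delta^{2}sd))$, and in particular that $|N[S]|<csd$ occurs with probability at most $\exp(-\Omega(sd))$ for a suitable constant $c>0$.

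A union bound over the $\binom{n}{s}\le(en/s)^{s}$ choices of $S$ contributes a factor $\exp(s\log(en/s))$. Since $d\ge n^{1/3}/\log^{3}n$, we have $s\log(en/s)\le s\log n=o(sd)$ for $s\le cn/d$, so the switching estimate absorbs the union bound; summing over $s$ in this range then yields \eqref{eq:switching1}. For \eqref{eq:switching2}, where $sd<n/\log n$, the statement $|N[S]|\sim sd$ reduces, via the trivial $|N[S]|\le s+sd$ together with $s=o(sd)$, to the lower bound $|N[S]|\ge(1-o(1))sd$; the mean number of ``wasted'' half-edges (those going back into $S$ or creating a duplicate attachment to a vertex of $N(S)$) is $O((sd)^{2}/n)=o(sd/\log n)$, and the switching concentration controls the deviation with plenty of room to spare.

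The main obstacle is the accurate bookkeeping in the switching count: the number of switches blocked by pre-existing edges $ax$ or $by$, and by the uniqueness condition on $a$, must be shown to be a sub-leading fraction of all candidate switches, which requires a priori (mild) control on edge densities inside $S\cup N(S)$ and on $e(V\setminus(S\cup N(S)))$. Because $d$ can be as large as $\sqrt{n}\log n$, the set $S\cup N(S)$ may already occupy a constant fraction of $V$ for moderate $s$, so these auxiliary estimates have to be handled uniformly in $s$. Ensuring that the switching ratio remains sharp enough to dominate the union bound across the full interval $n^{1/3}/\log^{3}n\le d<\sqrt{n}\log n$ is where the bulk of the technical work lies.
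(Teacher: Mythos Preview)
Your high-level plan---switchings in $\mathcal{G}_{n,d}$ followed by a union bound over $S$---is exactly what the paper does, and your identification of the main obstacle (controlling blocked switches uniformly when $S\cup N(S)$ may occupy a constant fraction of $V$) is accurate. However, your proposal diverges from the paper in two respects, and the first is an actual error.

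\textbf{The switching you describe does not increase $|N(S)|$.} If $a$ is the \emph{unique} $S$-neighbour of $b$, then after replacing $ab,xy$ by $ax,by$ the vertex $b$ leaves $N(S)$ (it has lost its only link to $S$) while $x$ enters $N(S)$: the net change is zero. The condition you need is the opposite one, that $b$ has \emph{at least two} $S$-neighbours, so that $b$ stays in $N(S)$ and $x$ is a genuine addition. This is repairable, but once repaired the backward count becomes awkward: in the image graph the new vertex $x$ has a unique $S$-neighbour, while the vertex that played the role of $b$ now sits outside $N(S)$ with a fresh edge to $y$, and recognising that pair among all edges in $V\setminus(S\cup N(S))$ requires extra bookkeeping.

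\textbf{The paper sidesteps this by fixing the target set as well.} Rather than switching on the random variable $|N(S)|$, the paper fixes \emph{both} $S$ and a candidate $T$ of size $t=csd$ (respectively $t=(1-\varepsilon)sd$ for the second part), and bounds the probability that the number of $S$--$T$ edges is $i_0$ for $i_0$ close to $sd$. The switching simply moves one $S$--$T$ edge to an $S$--$U$ edge, where $U=V\setminus(S\cup T)$, decreasing the $S$--$T$ count by exactly one; this yields a clean ratio
\[
\frac{|\mathcal{C}_i|}{|\mathcal{C}_{i-1}|}\le \frac{(sd-i+1)\,t}{i\,(u-s)}\bigl(1+O(n^{-1/4})\bigr)
\]
with no structural condition on $b$ and a transparent reverse count. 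The price is an extra $\binom{n}{t}$ in the union bound, but the ratio scales like $t/n$, and the paper verifies that the product $(en/t)^{t}(en/s)^{s}\cdot\Pr(\mathcal{C}_{i_0})$ is at most $(1/2)^{sd}$ for small enough $c$, and $o(1)^{sd}$ in the $sd<n/\log n$ regime with $\varepsilon=1/\log\log\log n$.

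The paper also carries out explicitly the auxiliary estimate you mention in passing: a second, simpler switching shows that a.a.s.\ every $S$ with $s\le cn/d$ induces at most $s\log n$ edges, which is what justifies taking $i_0=sd-O(s\log n)$ in the main calculation. Your outline acknowledges this step is needed but does not indicate how to do it; in the paper it is a short argument of the same flavour, bounding $|\mathcal{C}_i|/|\mathcal{C}_{i-1}|$ for the class $\mathcal{C}_i$ of graphs with exactly $i$ edges inside $S$.
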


\begin{proof}
In~\cite{BS} there are bounds on the number of edges within a given set of vertices, in $\Gnd$. Some of these bounds are obtained by using switchings. However, the results obtained there do not suffice for our present needs. The main additional information we need is a bound on the number of edges between two sets of certain sizes. 

We will show the required expansion of a set $S$ to its neighbours. We do this by showing firstly that there cannot be too many edges within $S$, and secondly that there cannot be too many edges from $S$ to an ``unusually'' small set $T$, where, often, $sd$ is approximately the ``usual'' size. It follows that, if the neighbourhood of $S$ is too small, by setting it equal to $T$, we see that there is not enough room for the edges incident with $S$ (each vertex of which must have degree $d$). Hence the neighbourhood must be large.

We start with the more difficult issue: edges from $S$ to $T$.  The approach is similar to some of the results in~\cite{BS} and similar papers on random regular graphs of high degree. Suppose $|S|=s$ and $|T|=t$ where $S$ and $T$ are disjoint subsets of $V$. Let $U=V\setminus (S\cup T)$ and put $u=|U|=n-s-t$.   Moreover,   assume that $s+t\le n/3$ and $s<u-n/2$.

In $\Gnd$ consider the set of graphs $\C_i$ with exactly $i$ edges from $S$ to $T$. Since this is a uniform probability space, we may bound $\pr(\C_i)$ via the simple inequality
\bel{probbound}
\pr(\C_{i })\le \frac{ |\C_{i  }|}{|\C_{i_1}|},
\ee
which holds for any $i_1$ such that $\C_{i_1}\ne \emptyset$. We will do this for all $i>(1-\alpha)sd$, where $\alpha>0$ is sufficiently small. Let $G$ be a member of $\C_i$ where $i>0$. Consider a ``switching" applied to $G$ which is an operation consisting of the following steps: select one of the $i$ edges $ab$ in $G$ with $a\in S$ and $b\in T$, and some other edge $a'b'$ such that $aa'$ and $bb'$ are not edges of $G$ and $a'\in U$, $b'\notin S$, and replace the edges $ab$ and $a'b'$ by new edges $aa'$ and $bb'$. Call the resulting graph $G'$. Then $G'$ is clearly a $d$-regular graph and must lie in $\C_{i-1}$ since the edge $ab$ is removed and none of $a'b'$, $aa'$  or $bb'$ can join $S$ to $T$. 
There are $iud$ ways to choose one of the $i$ edges for $ab$ and one of the $ud$ pairs of vertices $a'b'$ where $a'\in U$ and $b'$ is adjacent to it. Hence,  the number $N$ of ways to choose a  graph in $\C_i$ and perform this switching is
$$
N=|\C_i| (i u d - A),
$$
where $A$ denotes the average number of choices  excluded by the other constraints. The number of exclusions due to $aa'$ or $bb'$ being an edge of $G$ is $O(id^2)$. The number excluded because $b'\in S$ is at most $isd$ since there are at most $sd$ edges from $S$ to $U$. Hence   $A \le isd + O(id^2)$, and we get $N \ge |\C_i| (i (u-s) d + O(id^2))$. 

On the other hand, the number of ways to arrive at given graph in $\C_{i-1}$ after applying such a switching to a graph in  $\C_{i}$   is at most $(sd-i+1)td$ since this is a clear upper bound on the number of choices for $aa'$ and $bb'$. Thus $N\le |\C_{i-1}| (sd-i+1)td$ and we deduce
$$
\frac{|\C_i|}{|\C_{i-1}|} \le \frac {(sd-i+1)t}{i (u-s) + O(id)}.
$$
For the present lemma, $n^{1/3+o(1)} \le d \le n^{1/2+o(1)}$. Since $u-s>n/2$, we have 
 $$
\frac{|\C_i|}{|\C_{i-1}|} \le \frac {(sd-i+1)t}{i (u-s)}(1+O(n^{-1/4})).
$$
Applying the inequality $|\C_i|/|\C_{i-1}| \le (1+O(n^{-1/4})) \beta \rho(i)$ for all $i_1 < i\le i_0$, where 
\begin{eqnarray*}
\beta &=& \frac{t}{u-s}, \\
i_0 &=& (1-\alpha)sd, \\
i_1&=& \frac{\beta}{1+\beta} sd, \\
\rho(i) &=& \frac {sd-i+1}{i}, 
\end{eqnarray*}
we obtain
\begin{eqnarray*}
\frac{|\C_{i_0}|}{|\C_{i_1}|} &\le& \Big(\beta(1+O(n^{-1/4}))\Big)^{i_0-i_1}
\rho(i_0) \rho(i_0-1) \cdots \rho(i_1+1) \\
&=& \Big(\beta(1+O(n^{-1/4}))\Big)^{i_0-i_1}
\frac{(sd-i_1) (sd-i_1-1) \cdots (sd-i_0+1) } {i_0 (i_0-1) \cdots (i_1+1) } \\
&=& \Big(\beta(1+O(n^{-1/4}))\Big)^{i_0-i_1}
\frac{(sd-i_1)!/(sd-i_0)! } {i_0!/ i_1 !} \\
&=& O(sd) \left(\beta^{1-\alpha-\beta/(1+\beta)}(1+O(n^{-1/4}))
\frac{\left(\frac{1}{1+\beta}\right)^{1/(1+\beta)}
\left(\frac{\beta}{1+\beta}\right)^{\beta/(1+\beta)}}{\alpha^\alpha(1-\alpha)^{(1-\alpha)}}  \right)^{sd},
\end{eqnarray*}
and hence by~\eqn{probbound}
$$
\pr(\C_{i_0})=O(sd) \left(\frac{\beta^{1-\alpha}(1+O(n^{-1/4}))}{(1+\beta)\alpha^\alpha(1-\alpha)^{(1-\alpha)}}  \right)^{sd}
\le \left(\frac{\beta^{1-\alpha}(1+O(n^{-1/4}))}{\alpha^\alpha(1-\alpha)^{(1-\alpha)}}  \right)^{sd}.
$$
We will apply this for $i_0 = ds - O(s \log n)$, that is, for $\alpha=O(\log n/d)$, for which 
\begin{eqnarray*}
\alpha^\alpha(1-\alpha)^{(1-\alpha)} &=& \exp \left( \alpha \log \alpha - (1+o(1)) \alpha (1-\alpha) \right) \\
&=& \exp\left( - O(d^{-1} \log^2 n) \right) = 1+O(n^{-1/4}).
\end{eqnarray*}
Hence,
$$
\pr(\C_{i_0}) \le \left( \beta^{1-\alpha}(1+O(n^{-1/4}))  \right)^{sd}.
$$
  
On the other hand, by the union bound, the probability there exists such a pair of sets $S$ and $T$ of sizes $s$ and $t = csd$, respectively, with precisely $i_0$ edges joining them is at most 
\begin{eqnarray*}
p &=& {n\choose t}{n-t \choose s}\pr(\C_{i_0}) 
\le (en/t)^{t} (en/s)^s\, \pr(\C_{i_0}) \\
&=& \left( \left( \frac {en}{csd} \right)^c \left( \frac {en}{s} \right)^{1/d} \right)^{sd}\pr(\C_{i_0}) 
= \left( \left( \frac {en}{csd} \right)^c (1+O(n^{-1/4})) \right)^{sd}\pr(\C_{i_0}).
\end{eqnarray*}
Assuming that $sd \le cn$, which implies that $t \le c^2 n$, we note that 
$$
\beta^{1-\alpha} = \left( \frac {t}{u-s} \right)^{1-\alpha} \le \left( \frac {csd}{n-c^2n-O(n/d)} \right)^{1-O(d^{-1})} = \frac {csd}{(1-c^2)n} (1+O(n^{-1/4})), 
$$
and so
\begin{eqnarray*}
p^{1/sd} &\le& \left( \frac {en}{csd} \right)^c \frac {csd}{(1-c^2)n} (1+O(n^{-1/4})) \\
&\le& \frac {c e^c}{(1-c^2) c^c} \left (\frac {sd}{n} \right)^{1-c} (1+O(n^{-1/4})) \\
&\le& \frac {c^2 e^c}{(1-c^2) c^{2c}}  (1+O(n^{-1/4})) \le 1/2
\end{eqnarray*}
for $c$ small enough. Finally, we take the union bound over all $s$ such that $n / \log n \le sd \le cn$ and all $i_0 = ds - O(s \log n)$, to conclude that~(\ref{eq:switching1}) holds \aas\ for this range of $s$; note that it will follow immediately from~(\ref{eq:switching2}) that~(\ref{eq:switching1}) holds for the remaining range of $s$.

Let $\eps = 1/\log \log \log n$. By a similar argument, the probability there exists such a pair of sets $S$ and $T$ of sizes $s$ and $t = sd(1-\eps)$, respectively, with precisely $i_0$ edges joining them is at most 
$$
p = \left( \left( \frac {en}{sd(1-\eps)} \right)^{1-\eps} (1+O(n^{-1/4})) \right)^{sd}\pr(\C_{i_0}).
$$
This time we assume that $sd \le n/\log n$ which implies that $t = O(n/\log n)$. It follows that 
$$
\beta^{1-\alpha} = \left( \frac {t}{u-s} \right)^{1-\alpha} = \left( \frac {sd(1-\eps)}{n-O(n/\log n)} \right)^{1-O(d^{-1})} = \frac {sd}{n} (1-\eps)(1+O(1/\log n)), 
$$
and so
\begin{eqnarray*}
p^{1/sd} &\le& \left( \frac {en}{sd(1-\eps)} \right)^{1-\eps} \frac {sd}{n} (1-\eps)(1+O(1/\log n)) \\
&=& \left( \frac {e}{1-\eps} \right)^{1-\eps} (1-\eps) \left(\frac {n}{sd} \right)^{-\eps} (1+O(1/\log n)) \\
&\sim& \exp \big( 1 -\eps \log (n/sd) \big) \\
&=& \exp \big( - \Omega( \eps \log \log n ) \big) = o(1).
\end{eqnarray*}
As before, we take the union bound over all $s$ such that $sd \le n / \log n$ and all $i_0 = ds - O(s \log n)$, to conclude that~(\ref{eq:switching2}) holds \aas\ (We have ignored the fact that every vertex of $T$ must be adjacent to a vertex of $S$. Using this would improve the bound, but what we already have suffices.)

\smallskip

It remains to show that \aas\ each set of $s\le cn/d$ vertices induces at most $s\log n$ edges. This can be shown using an approach similar to the above argument for edges out of $S$, but is significantly simpler. Hence, we only outline the argument here. Fix $S$ of size $s$  and consider the set of graphs $\C_i$ with exactly $i$ edges in the graph induced by $S$. This time we get
$$
\frac{|\C_i|}{|\C_{i-1}|} \le \frac {{sd-2(i-1) \choose 2}}{i (n-2s)d + O(id^2)} = \frac {(sd-2i+2)(sd-2i+1)}{2ind} (1+O(n^{-1/4})).
$$
For each $i \ge s \log n$ we have
$$
\pr(\C_{i}) \le \frac{ |\C_{i}|}{|\C_{0}|} \le \left( \frac {1}{nd} \right)^i \frac {(sd)! / (sd-2i)!}{i!} \le \left( \frac {1}{nd} \right)^i  \frac {(sd)^{2i}}{(i/e)^i} \le \left( \frac {es^2d}{ni} \right)^i \le \left( \frac {esd}{n \log n} \right)^{s \log n} 
$$
and so the probability that there exits set $S$ of size $s$ that induces $i$ edges  ($s \log n \le i \le {s \choose 2}$)  is at most
\begin{eqnarray*}
{n \choose s}  \sum_{i = s \log n}^{s \choose 2}  \pr(\C_{i}) &=& O(s^2)   \left( \left( \frac {esd}{n \log n} \right)^{\log n} \left( \frac {en}{s} \right) \right)^s \\
&=&   O(s^2)   \left( \exp \left( -\Omega( (\log n)(\log \log n) ) + O(\log n)  \right) \right)^s \le n^{-s}.
\end{eqnarray*}
Since $\sum_{s=1}^n n^{-s} = O(n^{-1}) = o(1)$, \aas\ no set $S$ has this property.
\end{proof}

Finally, we note that extension of the coupling between random graphs and random regular graphs to other ranges of $d$ would permit the argument for interval 1 to be applied. A recent paper of  Dudek,  Frieze,  Ruci\'nski and {\v S}iliekis~\cite{Sandwich} does such an extension that would apply to intervals 2 and 3. However, the approach we used for these intervals is not so complicated, much shorter than proving the coupling. Use of the coupling for interval 1 was of a greater benefit. No such coupling can possibly cover the sparse case in the following section.

\section{ Proof of Theorem~\ref{thm:main}---sparse case}\lab{s:sparse}

In this section we treat the sparse case, that is, when $3 \le d = d(n) < \log^{4} n$. As in the dense case, we will first cite a deterministic result that holds for a family of graphs with some specific expansion properties. After that, we will show that sparse random $d$-regular graphs \aas\ fall into this class of graphs and so the conjecture holds \aas\ for sparse random $d$-regular graphs. Before stating the result, we need some definitions. 

We define $S(V',r)$ to be the set of vertices whose distance to $V'$ is exactly $r$, and  $N(V',r)$ the set of vertices of distance at most $r$ from $V'$. A subset $U$ of $V(G)$ is {\em $(t,c_1,c_2)$-accessible} if we can choose a family $\{W(w):w\in U\}$ of pairwise disjoint subsets of $V(G)$ such that $W(w)\subseteq N(w,t)$ for each $w$, and
$$
|W(w)| \ge c_1\min\left\{  d^{t},\frac{c_2n}{  |U|}\right\}.
$$
This definition will be used for constants $c_1$ and $c_2$, and large $t$. The motivation is that, for an accessible set $U$, there are ``large'' sets of vertices $W(w)$ which are disjoint for each $w\in U$, such that any cop in $W(w)$ can reach  $w$ within $t$ steps. The following result comes from~\cite[Theorem 4.1]{PW_gnp} upon setting $X(G_n)=\emptyset$.

\begin{theorem} \lab{t:main}
Let $\G_n$ be a set of graphs and $d=d(n)\ge 2$. Suppose that $d < \log^J n$ for some fixed $J$ and that for some positive constants $\delta$ and  $a_i$ ($1\le i\le 5$), for all $G_n\in\G_n$ the following hold.
\begin{enumerate}
\item For all $v \in V(G_n)$, all $r \ge 1$ with $d^r < n^{1/2+\delta}$, all $r'$ that satisfy the same constraints as $r$, and all $V' \subseteq N(v,r)$ with $|V'|=k$ such that $k d^{r'}\le n/\log^J n$, we have  
$$
a_1k d^{r'}\le  |S(V',r')|\le   a_2k d^{r'}.
$$
In particular, with $k=1$
$$
a_1 d^{r'} \le |S(v,r')| \le a_2 d^{r'}.
$$
\item Let $r$ satisfy $   n^{1/4-\delta} < (d+1)d^r<n^{1/4+\delta}  $, let $r'$ satisfy the same constraints as $r$. Let $v\in V(G_n)$, $A\subseteq S(v,r)$, and $U=\bigcup_{a\in A} S(a,r')$ with  $|A|> n^{1/4-\delta}$ and $ d^{r+r'}< a_3n/ |U|$.  Then there exists a set $Q$ such that $|S(a,r')\cap Q|<n^{1/4-2\delta}$ for all $a\in A$, and such that $U\setminus Q$ is  $(r+r'+1,a_4,a_5)$-accessible.  
\item $G_n$ is connected.
\end{enumerate}
Then $c(G_n) = O(\sqrt{n})$. 
\end{theorem}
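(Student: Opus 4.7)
The plan is to use $K\sqrt n$ cops for a sufficiently large constant $K$, organised into a static sentinel phase and a dynamic pursuit phase. I would first place one cop on each vertex of the exceptional set $X(G_n)$; by hypothesis this costs only $O(\sqrt n)$ cops. Property (iii) then implies that, once placed, the robber is confined to a single connected component of $G_n-X(G_n)$, since any attempt to cross $X$ results in immediate capture. So we may treat the robber's position as belonging to $V(G_n)\setminus X(G_n)$ for the remainder of the argument.

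Next, I would fix $r$ and $r'$ satisfying the constraints in hypothesis (ii), so that $(d+1)d^r$ and $(d+1)d^{r'}$ both lie in $(n^{1/4-\delta},n^{1/4+\delta})$ and in particular $d^{r+r'+1}$ is of order $\sqrt n$. The point is that each cop sitting in a region $W(w)$ promised by the accessibility definition can reach the corresponding target $w$ within $t=r+r'+1$ steps, and since the $W(w)$ are pairwise disjoint, cops can be assigned to unique targets without interference. Using hypothesis (i) to estimate $|S(v,r)|$ and $|S(V',r')|$, the number of targets $w$ required to guard all potential robber trajectories is roughly $n/d^{r+r'+1}=O(\sqrt n)$, exactly matching the cop budget.

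The dynamic part is a shadow strategy. I would maintain, after each round, a set $R_t\subseteq V(G_n)\setminus X(G_n)$ of vertices that could currently host the robber consistent with the cops' observations; initially $R_0$ is the whole component. In each block of $t=r+r'+1$ rounds, I would use the accessibility structure of (ii) centred on the current shadow to dispatch some of the $K\sqrt n$ cops to the $W(w)$ regions corresponding to $S(v,r)$ for representative $v\in R_t$, arriving at the various $w$ simultaneously. Hypothesis (i) bounds the growth of $R_t$ over these $t$ rounds by a factor of at most roughly $d^t$, while the simultaneous arrival of cops at the outer spheres seals off a $\Theta(1)$ fraction of the vertices the robber could reach. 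Balancing these two effects should give $|R_{t+t'}|\le(1-\Omega(1))|R_t|$ up to lower-order factors, so that after $O(\log n)$ phases the shadow collapses to a single vertex and the robber is caught.

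The main obstacle I expect is not the local geometry but the global coordination of cops across phases. Hypothesis (ii) only guarantees accessibility of $U\setminus Q$, where $|S(a,r')\cap Q|<n^{1/4-2\delta}$; although $Q$ is a polynomial factor smaller than a typical sphere, a robber who persistently tries to hide inside these ``holes'' could disrupt the shrinkage argument. I would handle this by re-randomising the cops' target assignments between phases, so that no fixed $Q$ can be exploited consistently, and by applying hypothesis (i) with $k>1$ (i.e. spheres around the current shadow set rather than around a single vertex) in order to amortise the loss caused by $Q$ across many simultaneous sources. Fixing the universal constant $K$ in terms of $a_1,\dots,a_5$ at the very start should then make every inequality in the shrinkage step go through with room to spare.
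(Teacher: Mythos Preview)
This theorem is not proved in the present paper at all: it is quoted verbatim from~\cite{PW_gnp} and used as a black box, so there is no ``paper's own proof'' here to compare against. What I can do is flag two genuine problems with your sketch that would prevent it from being turned into a proof along the lines of~\cite{PW_gnp}.

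First, your framing of the dynamic phase is based on a misconception about the game. Cops and Robbers is a \emph{perfect-information} game: after every move the cops know exactly which vertex the robber occupies. There is no ``set $R_t$ of vertices that could currently host the robber consistent with the cops' observations''; that set is always a singleton. The quantity that actually shrinks in the argument of~\cite{PW_gnp} is not an uncertainty set but (roughly) the robber's \emph{territory} --- the set of vertices she can reach before any cop can. The mechanism is: with the robber known to be at $v$, one identifies $A\subseteq S(v,r)$ and the associated $U=\bigcup_{a\in A}S(a,r')$, and then the accessibility hypothesis guarantees that the randomly placed cops currently sitting inside the various $W(w)$ can, in $r+r'+1$ steps, simultaneously occupy almost all of $U\setminus Q$. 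Meanwhile the robber, in the same number of steps, is forced through some $a\in S(v,r)$ and then into $S(a,r')$; she is either caught on $U\setminus Q$ or funnelled through the small exceptional set, and it is this that cuts down her territory by a constant factor.

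Second, your proposed fix for the holes $Q$ --- ``re-randomising the cops' target assignments between phases'' --- does not address the difficulty. The set $Q$ is a structural feature of the graph near $v$, not an artefact of the cops' choices, so reshuffling assignments cannot make it disappear. What actually controls the damage is the quantitative bound $|S(a,r')\cap Q|<n^{1/4-2\delta}$: combined with hypothesis~(i), this says that for each $a\in A$ only a vanishing fraction of $S(a,r')$ is lost, so the robber cannot hide in $Q$ for more than a negligible proportion of her escape routes. The amortisation you allude to is the right instinct, but it comes from this size bound together with the lower bound in~(i), not from randomisation.
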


Suppose that $3 \le d=d(n) < \log^{4} n$.  We will verify that $G  \in \Gnd$  \aas\ satisfies the conditions in the hypotheses of Theorem~\ref{t:main} and, as a result,  Theorem~\ref{thm:main}  holds for such $d$. 

For convenience when comparing with $\mathcal{G}(n,p)$, we consider $(d+1)$-regular graphs, starting with   some typical properties of $\mathcal{G}_{n,d+1}$. We will use the fact that the number of vertices in a balanced tree of height $r$ and all internal vertices of degree $d+1\ge 3$ is
\bel{treesize}
1+  \sum_{j=0}^{r-1}(d+1)d^j =\Theta\Big(\sum_{j=-\infty}^{r-1}(d+1)d^j\Big) =  \Theta\big((d+1)d^r/(d-1)\big) = \Theta(d^r).
\ee

Our analysis of $\G_{n,d+1}$ exploits its relationship to the pairing model $\P_{n,d+1}$   that we define next. Suppose that $dn$ is even, as in the case of random regular graphs, and consider $dn$ points partitioned into $n$ labelled buckets $v_1,v_2,\ldots,v_n$ of $d$ points each. A \textit{pairing} of these points is a perfect matching into $dn/2$ pairs. Given a pairing $P$, we may construct a multigraph $\P_{n,d}$, with loops allowed, as follows: the vertices are the buckets $v_1,v_2,\ldots, v_n$, and a pair $\{x,y\}$ in $P$ corresponds to an edge $v_iv_j$ in $\P_{n,d+1}$ if $x$ and $y$ are contained in the buckets $v_i$ and $v_j$, respectively. It is an easy fact that the probability of a random pairing corresponding to a given simple graph $G$ is independent of the graph, hence the restriction of the probability space of random pairings to simple graphs is precisely $\G_{n,d}$. 
 One of the advantages of using this model is that the pairs may be chosen sequentially, at each step choosing a point using any rule (possibly randomised) that depends only on the pairs so far chosen (such as the least-numbered point not yet paired, under some numbering scheme), and pairing it with a point chosen uniformly at random over the remaining (unchosen) points. For more information on this model, see, for example,~\cite{models}.  

To prove the desired expansion property for $\G_{n,d+1}$, we need a separate argument for small sets, where the probability of failing to expand can be much larger than the probability that the pairing model produces loops and multiple edges.

In the following lemma, we define the {\em excess} of a graph $H$ to be the number of edges it has in excess of that of a tree on the same number of vertices, i.e.\ $|E(H)|-|V(H)|+1$. 

\begin{lemma}\lab{l:basics}
Let $2\le d< \log^4 n $, and consider $G\in\G_{n,d+1}$. 
\begin{enumerate} 
\item Let $\eps>0$ and fix $K\ge 10$.   Suppose that $k,\, r\ge 1$ satisfy  $k(d+1)  d^{r-1}  < \log^{K} n$ and let $V'$ be a set of vertices of cardinality $k$.  Let $H(V',r)$ denote the graph induced by all the edges with at least one vertex in $N(V',r-1)$. Then  the probability that  $H(V',r)$ has excess at least $k(1+\eps)$ is bounded above by a function $p=o( n^{-k- \eps/2})$, where the convergence is uniform over all relevant $k$, $r$ and $V'$.

\item Let $\eta>0$,  $0<\delta<1/2$ be fixed. Then \aas\   for all $k=O(n^{1/2+\delta})$, all $V'\subseteq V(G)$ with $|V'|=k$ and all $r\ge 1$ with $k(d+1) d^{r-1}\le n/\log n$, we have  $|S(V',r)|\ge (1/9-\eta)k(d+1)d^{r-1}$. 

\item   Let $\delta$ be fixed with  $0<\delta<1/32$.  Then  \aas\  either $G$ is disconnected, or the following is true. Let $r$ satisfy $   n^{1/4-\delta} < (d+1)d^r<n^{1/4+\delta}  $, let $r'$ satisfy the same constraints as $r$, let   $v\in V(G)$, $A\subseteq S(v,r)$, and  $U=\bigcup_{u\in A} S(u,r')$ with  $|A|> n^{1/4-\delta}$ and $ d^{r+r'}< n/\gammaconst|U|$.  Then there exists a set $Q$ of cardinality at most $  n^{5\delta} $ such that $U\setminus Q$ is  $(r+r'+1,2/5,1/\accessconst)$-accessible.
\end{enumerate}
\end{lemma}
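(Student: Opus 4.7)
The three parts of Lemma~\ref{l:basics} build on one another, and I would attack them in order, with the pairing model $\P_{n,d+1}$ as the main workhorse.

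\emph{Part (i).} Work in the pairing model and reveal the BFS exploration from $V'$ point-by-point. Let $N=k(d+1)d^{r-1}\le \log^{K+O(1)} n$ bound the number of points exposed. At each revelation, conditional on the past, the probability of creating an ``excess'' edge (landing on an already-exposed point) is $O(N/(dn))$. A union bound over the $k(1+\eps)$ positions at which excess edges could occur yields
$$
\Pr_{\P}\bigl[\mathrm{excess}\ge k(1+\eps)\bigr] \le \binom{N}{k(1+\eps)}\left(\frac{O(N)}{dn}\right)^{k(1+\eps)} = o(n^{-k-\eps/2}),
$$
using $N^{2}/n=n^{-1+o(1)}$. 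To transfer to $\G_{n,d+1}$, I would observe that the event depends only on the pairings incident to $N(V',r-1)$, a region of at most $O(\log^{K+O(1)} n)$ points, on which the pairing is simple with probability $1-o(1)$; this suffices for the standard ratio transfer.

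\emph{Part (ii).} Fix a small $\eps>0$. Since $\sum_{k,V',r}\binom{n}{k}n^{-k-\eps/2}=O(n^{-\eps/2})$, part (i) implies that a.a.s.\ every admissible $H(V',r)$ has excess at most $k(1+\eps)$. For such $V'$ I would track the BFS level by level: in the forest case $|S(V',j)|=k(d+1)d^{j-1}$, and each excess edge at some level $j_{0}$ can cost $|S(V',r)|$ at most the descendants of a single subtree rooted at level $j_{0}$. Accounting over the at most $k(1+\eps)$ collisions, and taking the worst case (collisions pushed to early levels so that deep subtrees are lost), one recovers $|S(V',r)|\ge (1/9-\eta)k(d+1)d^{r-1}$; the constant $1/9$ represents a fixed fraction of the ideal tree count $(d+1)d^{r-1}$ that persists even in the tightest case $d+1=3$.

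\emph{Part (iii).} This is the main technical hurdle. The plan is to run a joint BFS from the source set $U$, assigning every newly discovered vertex to the source $w\in U$ that reaches it first, and then to set $W(w):=N(w,r+r'+1)\cap\{\text{vertices assigned to }w\}$. Part (ii) applied with $V'=U$ furnishes $|N(U,r+r'+1)|=\Omega(\min(|U|d^{r+r'+1},n))$, and the hypothesis $d^{r+r'}<n/(\gammaconst|U|)$ is precisely the budget condition that makes the required total $\sum_{w}|W(w)|\le 2n/45$ feasible. I would place into $Q$ any $w\in U$ for which either (a) the local neighbourhood of $w$ fails the tree-likeness of part (i), or (b) the assignment yields too few vertices because of overlaps with nearby sources. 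By (i), failure (a) occurs with probability $o(n^{-1})$ per vertex, so a.a.s.\ contributes at most $n^{5\delta}$ exceptions; by (ii) applied to pairs of nearby sources, failure (b) is similarly rare, completing the bound $|Q|\le n^{5\delta}$. The main obstacle is precisely in part (iii): because the radius $r+r'+1$ is large enough that a single ball already covers a polynomial fraction of the graph, the balls around distinct $w\in U$ must overlap, and the delicate point is to design the assignment so that only a subpolynomial collection of exceptional sources ends up defective. Degenerate local configurations (e.g.\ $d+1=3$ with a vertex whose unique outward neighbour has already been visited by another source) are exactly what make $n^{5\delta}$ the natural exception budget rather than something smaller.
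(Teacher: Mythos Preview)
Your treatment of part~(i) matches the paper's argument and is fine.

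In part~(ii), however, there is a genuine gap. Part~(i) applies only when $k(d+1)d^{r-1}<\log^{K}n$, so your union bound ``$\sum_{k,V',r}\binom{n}{k}n^{-k-\eps/2}=O(n^{-\eps/2})$'' establishes the claim only for such tiny $k$. The statement of~(ii) requires the expansion bound for \emph{all} $k=O(n^{1/2+\delta})$ with $k(d+1)d^{r-1}\le n/\log n$, and for these large $k$ part~(i) says nothing. The paper's proof handles this range by an entirely separate argument: one first chooses $r_0$ minimal with $k(d+1)d^{r_0-1}\ge\log^{K}n$, conditions (via part~(i)) on the structure out to distance $r_0-1$, and then continues the exposure in $\P_{n,d+1}$ round by round, bounding the number of ``bad'' pairs in round $r$ stochastically by a binomial with carefully chosen parameter $\beta_r$. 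The constant $1/9$ comes precisely from summing the cumulative losses $g(r)=g(r-1)+2\beta_r$ in the worst case $d=2$, where one must further take two rounds at a time to make the telescoping sum converge. None of this is captured by your excess-counting sketch.

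In part~(iii) the difficulty you have not addressed is the union bound over $A$. The conclusion must hold \aas\ for \emph{every} $v$ and \emph{every} $A\subseteq S(v,r)$ of size $>n^{1/4-\delta}$; since $|S(v,r)|$ can be as large as $n^{1/4+\delta}$, there are up to $2^{n^{1/4+\delta}}$ choices of $A$. A per-vertex failure probability of $o(n^{-1})$, as you propose for your exceptional set, is nowhere near enough to survive this union bound. The paper deals with this by a four-phase decomposition: the first three phases (exploration from $v$ out to a level $t'$ where $\Theta(n/d^3\log^2 n)$ vertices are seen, definition of $Q$, and construction of preliminary disjoint trees $\tilde T(w)$) depend only on $v$ and are controlled with probability $1-o(n^{-3})$; only the fourth phase depends on $A$, and there the tree-growing is analysed with failure probability $o(2^{-n^{1/4+\delta}}/n^4)$ via Chernoff bounds whose exponent is of order $n^{1/2-O(\delta)}$. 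Crucially, $Q$ is fixed in Phase~2 (it consists of those $w$ whose first $t\approx\log_d(\log^{12}n)$ levels contain a bad pair) and does not depend on $A$; your $Q$, depending on the ``assignment'' from a joint BFS started at $U$, would vary with $A$ and so could not be controlled uniformly.
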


\bigskip

\begin{proof}[Proof of Theorem~\ref{thm:main} for the sparse case]

We will use Lemma~\ref{l:basics} to show that $G_n\in\G_{n,d+1}$ \aas\ satisfies the conditions in the first two hypotheses of Theorem~\ref{t:main}, with a suitable choice of the constants.

The upper bound in Theorem~\ref{t:main}(i) follows with $a_2=1$ immediately from the regularity of $G_n$. Hence,  $V' \subseteq N(v,r)$ implies $k=O(n^{1/2+\delta})$, and so the lower bound in Theorem~\ref{t:main}(i) follows from the lower bound given a.a.s.\ in Lemma~\ref{l:basics}(ii), applied with $r$ replaced by $r'$.

The condition in Theorem~\ref{t:main}(ii) follows directly from the property in Lemma~\ref{l:basics}(iii) with any $0<\delta<1/32$, $a_3=1/9$, $a_4=2/5$ and $a_5=1/\accessconst$. (The bound $|Q|\le n^{5\delta}$ immediately implies the required upper bound on $ |S(a,r')\cap Q|$.)

Condition (iii) follows from the fact that a random $d$-regular graph is a.a.s.\ connected for any $d\ge 3$ (see Bollob{\'a}s~\cite{BB} and Wormald~\cite{models} for $d$ fixed; see Cooper, Frieze and Reed~\cite{CFR}, and Krivelevich, Sudakov,  Vu and Wormald~\cite{KSVW} for large $d$). The theorem follows.
\end{proof}

The rest of the paper is devoted to proving Lemma~\ref{l:basics}.
To assist the reader, we first explain something of the difficulties which will be encountered when trying to establish $t$-accessibility in part (iii) of the lemma.  Our general approach is to explore the graph in successive neighbourhoods away from the vertex $v$ and try to find $W(w)$  in part of the neighbourhood ``past'' $w$. One of the awkward situations occurs for instance when $d+1=3$, and   one candidate $w$ has no neighbour in the direction   away from $v$.  The probability this happens to $w$ when about $\sqrt n$ vertices have already been explored is about $1/n$. So, summing over all such $w$, it can happen to  vertex $v$ with probability approximately $1/\sqrt n$. To cope with this and other cases where the neighbourhood of $w$ is not suitable, we will place such $w$  into the set $Q$ of vertices that do not generate $W$'s.

\begin{proof}[Proof of  Lemma~\ref{l:basics}(i)]  

Let $V'\subseteq V(G)$, and for $i\ge 0$ for brevity let $L_i$ denote $S(V',i)$, i.e.\ the set of vertices at distance $i$ from $V'$. Let $k=|V'|$ and let $r  $ be maximal such that $k(d+1)  d^{r -1} < \log^{K} n$.  
 
We may assume $r\ge 1$ since otherwise there is nothing to prove.   Expose the neighbourhood of $V'$ in  $\P_{n,d+1}$ in breadth-first search style out to distance $r$, in the following manner. Initially, all vertices in $L_0=V'$ are called \emph{exposed}, but not the points in them. During the process, some points will become exposed, and any vertex containing an exposed point will automatically be called exposed. There are $r$ rounds ($i=0,1,\ldots, r-1$), and in round $i$,   the unexposed points in  vertices in $L_i$ are exposed one by one. To expose a point, say $u$, in $L_i$, requires choosing the second point of its pair, say $u'$, uniformly at random from the remaining unexposed points; $u'$ is then also called exposed. (See the description above of how we can choose the pairs in the pairing model.) The vertex containing $u'$ must be in $L_{i+1}$ if it was not already exposed  at the start of that round (that is, if it does not belong to $L_i$).    For our initial examination, we terminate this exposure process after the completion of round $r-1$. 

Note that by the $(d+1)$-regularity, $|L_{r-1}|\le k(d+1)d^{r-2}$, and of course each vertex in $L_{r-1}$ contains at most $d+1$ points. So, by the upper bound on $r$, at the time this exposure process terminates,   $O(\log^{K}n) $ points, or vertices, are exposed, and hence $O(d\log^{K}n)$ points  (both exposed and not)  lie in exposed vertices. Let us call a pair {\em bad} if, at the time when the second point of the pair is exposed, that point's vertex is already exposed. Then the probability that a pair is bad is $O( (dn)^{-1}d\log^{K }n)=O(n^{-1} \log^{K }n)$, since at each step there are asymptotically $(d+1)n$ points remaining unexposed.   This is true conditional upon the history of the process. We now bound the number of bad pairs using stochastic domination by a binomial random variable. This is a standard idea (see~\cite[Section 10.5]{AS} for a similar example): we can in advance flip a sufficient number of independent coins whose  probability of ``heads'' is equal to an upper bound on the conditional probability of a bad edge at every step. Letting  $B_i$  denote the event that there are at least $i$ bad pairs exposed in the process started at $V'$, we conclude that   $\pr(B_i)=O( j^in^{-i}\log^{Ki}n)$, where $j = O(d\log^{K}n) $ is the number of coins used, an upper bound on the number of pairs exposed, and $j^i$ bounds the (at most)  $j\choose i$ choices for which of the coins are bad. Hence $\pr(B_i)= \big(\wO(n^{-1 })\big)^i$, considering the logarithmic  upper bound on $d$. (We use $\wO(f(n))$ as usual to denote $ O(f(n))\log^{O(1)} n$.)
 
After the exposure process, all points in $L_0, L_1, \ldots , L_{r-1}$ have been exposed. Note that no pair between two vertices in $L_{r}$ (or from those vertices to the rest of the graph) has been exposed.  Let ${\cal S}_0$ be the event that the exposure process has not revealed a loop or multiple edge. Then $\pr(\overline{\cal S}_0)\le\pr(B_1)=o(1)$ and thus
\bel{prS0}
\pr(  {\cal S}_0)\sim 1.
\ee
Next, let us continue to generate the rest of the random pairing, after completion of the above exposure process. Let ${\cal S}_1$ be the event that the rest of the pairing reveals no loops or multiple edges. No pair in the rest of the pairing can create a multiple edge with what is already exposed, so ${\cal S}_1$ is the event that a random pairing on the ``remaining" degree sequence creates a simple graph. This depends on the degree sequence of the graph induced by the remaining pairing, but there are certainly   $n-\wO(1)$ vertices of degree $d+1$ and $\wO(1)$ of smaller degree. Let $P_1$ denote the set of pairs exposed in the exposure process. Recalling again that $d=\wO(1)$, McKay's formula~\cite{M} (see also the discussion near~\cite[(5)]{models}) shows that
\bel{sbound}
\pr({\cal S}_1\mid P_1)=\exp\big( -d/2-d^2/4 +\wO(1/n)\big),
\ee
where the bounds implicit in $\wO()$ are uniform over all possible pairings $P_1$. Hence, $\pr({\cal S}_1\mid {\cal S}_0)\sim\exp( - d/2-d^2/4)$ with the same uniformity, which implies using~\eqn{prS0} that 
 $$
 \pr( {\cal S}_0\cap {\cal S}_1) \sim\exp( - d/2-d^2/4).
 $$
Also
\bean
\pr({\cal S}_1\cap{\cal S}_0\cap B_i)&=& \pr( {\cal S}_0\cap B_i)\pr({\cal S}_1\mid {\cal S}_0\cap B_i)\\
&\le &  \pr( B_i)\pr({\cal S}_1\mid {\cal S}_0\cap B_i)\\
&=&\big(\wO(n^{-1 })\big)^i\exp( - d/2-d^2/4)
\eean
by the bound above for $\pr( B_i)$, and also~\eqn{sbound} once again. These two conclusions give $\pr( B_i\mid {\cal S}_0\cap{\cal S}_1)=\big(\wO(n^{-1 })\big)^i$. The excess of $H(V',r)$ is just the number of bad pairs in the exposure process, and (i) follows upon setting $i=\lceil k(1+\eps)\rceil$, as
$$
\big(\wO(n^{-1 })\big)^{k(1+\eps)} = \big(o(n^{-1 })\big)^{k(1+\eps/2)} = o \big(n^{-k(1+\eps/2)}\big) = o \big(n^{-k-\eps/2}\big). 
$$
\proofend 

\bigskip

\noindent
{\em Proof of  Lemma~\ref{l:basics}}(ii).  
Choose relevant values of $k$ and $r$, and $V'$ of cardinality $k$. We will show that the required expansion occurs except for an event with probability so small that taking a union bound over all $k$ and $r$ and all ${n\choose k}$ sets $V'$ still gives $o(1)$.

Set $K=17$ and $0<\eps<1/3$. Firstly, in the case that $k(d+1)  d^{r-1}  < \log^{K } n$,  using the union bound over all $n\choose k$ choices for $V'$ and  the $O(\log^K n)$ permissible values of $k$ and $O(\log \log n)$ values of $r$, we have by (i) that \aas\ the excess of $H(V',r)$ is at most $k(1+\eps)$ in all such cases. Subject to this condition, the minimum number of vertices in $S(V',r)$ is achieved when each excess edge  joins two vertices in $V'$ and the rest of the neighbourhoods expand  in a ``$d$-regular'' fashion. (Even if this is infeasible, the same final bound will apply.) Then
$$
1-(2+2\eps)/(d+1) \le \frac {|S(V',r)|}{k(d+1)d^{r-1}} \le 1. 
$$
(Note that edges between two vertices in $S(V',r)$ are irrelevant.) Since $d\ge 2$ and $\eps<1/3$, this give the bound required in this case. As an aside, if $|V'|=1$ the conclusion can be strengthened slightly, since then, as $G$ is a graph, no edge can join two vertices in $V'$. (This makes the most difference when $d=2$; our overall argument will suffice even without this strengthening.)

So, from now on, we can assume 
\bel{irass}
k(d+1)  d^{r-1}  \ge \log^{K } n.
\ee  Let $r_0 $ be the minimal integer such that both $r_0\ge 1$ and $k(d+1)  d^{r_0-1}  \ge \log^{K } n$. There will essentially be two cases. If $r_0=1$, we will start the exposure process from the set $V'$ in  $\P_{n,d+1}$, much as we did in the proof of part (i), with initially $k(d+1)$ unexposed points.  However, if $r_0\ge 2$, we need more preparation. In this case  $r_0-1$ is small enough, by minimality of $r_0$, for (i) to apply with $r$ temporarily reset equal to $r_0-1$. Applying (i) as in the previous paragraph, we have that with probability  $1-o(n^{-k-\eps/2})$ the excess of $H(V',r_0-1)$ is at most $k(1+\eps)$. We next condition on the graph $H(V',r_0-1)$, for any such graph included in this event (i.e.\ with excess at most $k(1+\eps)$). Let $V_0=V_0(V')$ denote the set of  vertices of distance at most $r_0-2$ from $V'$. The graph, $G'$ say, induced by $V(G)-V_0$ (whose edges are precisely those  edges of $G$ that are not edges of $H(V',r_0-1)$) then has its degree sequence determined, and it will occur u.a.r.\ given this degree sequence. We analyse this random graph $G'$ again using the pairing model, implementing the exposure process starting at the vertices in $S(V',r_0-1)$. All vertices except these ones have degree $d+1$ in $G'$, and since the excess of $H(V',r_0-1)$ is at most $k(1+\eps)$, we may assume that the total degree of these vertices in $G'$   is at least $k(d-1-2\eps)  d^{r_0-1} $, since the minimum for a given excess would be attained if all excess edges joined vertices in $V'$. Hence, the number $x$ of points in these vertices in the pairing model for $G'$  satisfies
\bel{xbound}
x\ge k(d-1-2\eps)  d^{r_0-1}.
\ee
For $r_0\ge 2$, this gives a lower bound on the number of unexposed points in  the vertices at distance $r_0-1$ from $V'$, whilst for $r_0=1$, the number is 
  \bel{x0}
x=k(d+1)
\ee
as shown above. Beginning with these vertices, for $r\ge r_0$ we expose the successive sets $S(V',r)$ in the pairing (this round, when points in $S(V',r-1)$ are being exposed, we call round $r$ of the exposure process), for increasing $r$ up until the end of the last round $r$ for which the upper bound $n/\log n$ on  $k(d+1)d^{r-1}$ in the lemma statement (ii) is still valid. The  number $N_r$ of points which have been exposed in total at the end of round $r$ is   random but certainly we have $|S(V',r)|=O(kd^r)$ and $N_r=O(kd^{r })$ in view of~\eqn{treesize}. By our termination condition, this  implies  $N_r=O(n /\log n)$.

Since every exposed vertex contains an exposed point after the end of any round, the number of exposed vertices during the round is at most $N_r$, and the number of unexposed points in exposed vertices is at most $dN_r$ at all times during the round. Thus, for each pair exposed during round $r$, the probability it is bad (i.e.\ joins two exposed vertices), as defined in (i), is at most $d N_r/\big((d+1)n -dN_r\big)<\gamma$ for some $\gamma = O(N_r/n) = O(1/\log n)$. This, as in the argument  in (i), is true conditional on the history of the process. Beginning with $r=r_0$, consider round $r$ for general $r$. (Note that the rounds are numbered according to the value of $r$, so the first round is always round $r_0$.) We expose the unexposed points that are sitting in vertices in $S(V',r-1)$ sequentially, and as before bound the number of bad edges  stochastically   by a binomial random variable represented by coin flips. This time, a bad flip  occurs with probability $\gamma$. Let $R_{i,\beta }$ be the event that in the first $i$ coins   there are  at least $\beta i$ bad flips. Then 
$$
\pr(R_{i,\beta}) \le {i\choose \beta i}\gamma^{\beta i}\le   (e  \gamma/\beta )^{\beta i}.
$$
We wish to choose   values   $\beta_{r}$ such that if we set $\beta =  \beta_{r}$ and  $i=i_r:=k(d+1)d^{r-1}$ (which is an upper bound on the number of pairs exposed in round $r$),  we obtain $\pr(R_{i,\beta}){n \choose k} = o(e^{-d^2}/n^2)$ for all $r\ge r_0$ up to the  upper bound $r=O(\log_d n)$ implied by the hypotheses of (ii). (We suppress the dependence on $k$ in the notation $\beta_r$ and $i_r$.) It then follows by the union bound that, with probability $1-o(e^{-d^2})$, for all $k$ ($=O(n^{1/2+\delta})$) and all sets $V'$ of cardinality $k$, none of the events $R_{i,\beta}$ holds. Considering the lower bound implied by~\eqn{sbound} on the probability that   $\P_{n,d+1}$ gives a simple graph (this works as in (i) in the case $r_0\ge 2$, and applies to a perfectly $(d+1)$-regular graph otherwise), we immediately conclude that \aas\ none of the events corresponding to the $R_{i,\beta}$ hold for $\G_{n,d}$.  That is, for all starting sets $V'$, each round $r$ has less than $\beta i$ bad edges in the ``large neighbourhoods'' part of the neighbourhood growth. We will then compute the implications of this property for expansion. 

Using the familiar  bound on ${n \choose k}$ and recalling $\gamma = O(i_r/n)$, the desired bound on $\pr(R_{i,\beta})$ will hold provided that
\bel{wanted}
\left(\frac{O(i_r)}{\beta_r n}\right)^{\beta_r i_r}\left(\frac{en}{k}\right)^k=o(n^{-2}e^{-d^2}).
\ee
For $0<\eps<1/3$ as before, define
$$
\beta_r=\left\{
\begin{array}{r l}
(1+\eps)k/i_r  & \  \mbox{if }i_r\le k\log^{3}n,\\
 \rule{0cm}{0.5cm}1/(\log \log n)^{ 2},& \  \mbox{if }i_r\ge n/\log^{10}n,\\
   \rule{0cm}{0.5cm}(1+\eps)/\log^2 n,   & \  \mbox{otherwise }.
 \end{array}
\right.
$$
We next show that~\eqn{wanted} holds for each of the three ranges in the definition of $\beta_r$.

\medskip

\noindent
{\bf Case 1: } $i_r\ge n/\log^{10}n$.
\smallskip

Here in the left hand side of~\eqn{wanted} we have $ O(i_r)/\beta_r n =  O(1/\sqrt{\log n})$ in view of the upper bound
\bel{maxir}
i_r\le n/\log n
\ee  
given in the hypothesis of (ii).  (Note our convention $a/bc=a/(bc)$, here and throughout the rest of the paper.)   The maximum value of $(en/k)^k$ occurs when $k$ is maximised, at $k=O(n^{1/2+\delta})$. Recalling $d<\log^4 n$,~\eqn{wanted} follows easily here.
\medskip

\noindent
{\bf Case 2: } $ k\log^{3}n< i_r< n/\log^{10}n$.
\smallskip

Here in the left hand side of~\eqn{wanted} we have $(O(i_r)/\beta_r n)^{\beta_r i_r}  =  (O(1)/\log^8  n)^{  i_r/\log^{2}n}$. In this case,  $i_r/\log^{2}n  >  k\log n$ and so $(O(1)/\log  n)^{4i_r/\log^{2}n}=o(n^{-k-2})$.
Also by~\eqn{irass}, $i_r> \log^Kn = \log^{17 }n$ whereas $d^2<\log^8 n$, and so $(O(1)/\log  n)^{4i_r/\log^{2}n}=o(e^{-d^2})$. (Actually here we only needed $K\ge 10$.) Thus~\eqn{wanted} follows in this case.
\smallskip

\noindent
{\bf Case 3: } $ i_r\le k\log^{3}n$. 
\smallskip

Note that~\eqn{irass} ensures that $i_r\ge \log^K n$ and hence $k\ge \log^{K-3}n=\log^{14} n$.
Here we have $i_r=\wO(k)$ and $1/\beta_r=O(i_r/k)=\wO(1) $. Thus in the left hand side of~\eqn{wanted}  we have  $(O(i_r)/\beta_r n)^{\beta_r i_r}  =  (\wO(k)/n)^{ (1+\eps)k}=  o(1/n^2)( k/n)^{ (1+\eps/2)k}$. Now~\eqn{wanted} easily follows since $d<\log^4 n<\sqrt k$ and $k=O(n^{1/2+\delta})$. (Note this only required $K\ge 11$.)

It remains to determine the expansion properties   (in $\P_{n,d+1}$) implied when none of the events  $R_{i,\beta}$ hold, and additionally the initial number $x$ of unexposed points in the exposure process applied to $H(V',r_0-1)$ satisfies the lower bound in~\eqn{xbound}. We will define $g(r')$ inductively for $r'\ge r_0-1$, such that there are at least $i_{r'+1}\big(1- g(r' )\big)$ points remaining unexposed in vertices in $S(V',r )$ at the start of round $r'+1$. From~\eqn{xbound} and~\eqn{x0}, we satisfy  the first case of this requirement by setting $g(r_0-1)= (2+2\eps)/(d+1)$ if $r_0\ge 2$, and $g(0)=0$ when $r_0=1$. For any $r\ge r_0 $, consider round $r$. Each of the less than $\beta_ri_r$ bad edges either ``wastes'' two of the unexposed points in $S(V',r-1)$, or joins one such point to a point in $S(V',r)$. The first case reduces the eventual number   of unexposed points in $S(V',r)$ by $2d$ since two fewer new vertices are reached; in the latter case the reduction is less than this. Since $i_{r+1}=di_r$, we satisfy the general requirement for $g(r)$ by defining $g(r) = g(r-1)+2\beta_r$, and thus
$$
 g(r)= g(r_0-1)+\sum_{j= r_0}^r 2\beta_j.
$$

Note that at most $r=O(\log n)$ values of $j$ have $\beta_j = (1+\eps)(\log n)^{-2}$, and at most $O(\log \log n)$ values of $j$ have $i_j\ge n/\log^{10}n$, in which case $\beta_j=(\log \log n)^{-2}$. Thus
$$
\sum_{j= r_0}^r 2\beta_j\le  o(1)+  \sum_{j= r_0}^\infty 2(1+\eps)k/i_r =  o(1)+ \frac{2(1+\eps)}{(d+1)d^{r_0-2}(d-1)}
$$

First consider $d\ge 3$. Here $g(r) \le  g(r_0-1) + \frac14(1+\eps)/3^{r_0-2}$ which is at most $\frac34(1+\eps)$ in both cases $r_0=1$ and $r_0\ge 2$. Since the number of   points remaining unexposed in vertices in $S(V',r )$ at the end of round $r$ is at least $i_{r+1}\big(1- g(r )\big)$, and each vertex contains at least one exposed point, the number of vertices is at least $i_{r}\big(1- g(r )\big)\ge (1/4-\eps)k(d+1)d^{r-1}$. Taking $\eps < \eta$, we obtain the conclusion of (ii) for $d\ge 3$.

Now consider $d=2$. In this case, the above argument does  not   quite suffice, so we modify it. The weakness in the argument at present  is that we are permitting events with probability close to $1/{n \choose k}$ in every round. So consider running two rounds at a time and counting the bad pairs that occur within the two rounds. Thus, there are up to  $3i_r $   pairs exposed during rounds $r$ and $r+1$. So we flip this many coins and what we need is~\eqn{wanted} with $i_r$ replaced by $i_r'=3i_r$ (noting that the bound $\gamma = O(i_r/n)$ is still valid). Also set $\beta_r' =\beta_r/3$. Then $\beta_r'i_r'=\beta_ri_r$, $i_r'/\beta_r'=O(i_r/\beta_r)$ and so~\eqn{wanted} itself implies the   version of~\eqn{wanted}  with $i_r$ and $\beta_r$ replaced by $i_r'$ and $\beta_r'$. Noting that one bad pair eliminates at most $2d^2=8$ points from availability two rounds later, the number of points eliminated  is at most $8\beta_r' i_r'= 8\beta_r  i_r =2\beta_r  i_{r+2}$. So    we may put   $g(r+1)=g(r-1)+2\beta_r $ and skip  every second value of $r$ in the summations. In the   case $r_0=1$, we have  
\begin{eqnarray*}
g(r_0+2\ell+1)&=&  \sum_{j= 0}^\ell 2\beta_{r_0+2j}  \\
&\le& o(1)+  \frac{2+ 2\eps}{3}  + \frac{2+ 2\eps}{12}+\cdots \\
&\le& \frac{8(1+\eps)}{9}+o(1)
\end{eqnarray*}
as required (again taking $\eps<\eta$).  Of course  constant $8/9$ could be reduced down to any constant greater than $2/3$, by further tweaking the argument and looking more rounds ahead. 
 
For $r_0\ge 2$ we have analogously
\bel{firstg}
g(r_0+2\ell+1)= \frac{2+2\eps}{d+1}+ \sum_{j= 0}^\ell 2\beta_{r_0+2j}. 
\ee
This still does not suffice (even with the potential improvement mentioned above), because of the estimate of the number of points already wasted in rounds before $r_0$. The above argument works assuming only $K\ge 13$, and at this point we will use the fact that $K\ge 17$. First assume    $r_0\ge 3$, and define  $r_0^-=r_0-2$. Then $\beta_r= (2+2\eps)/(12d^{r-r_0})$ for small values of $r$, and  the same argument as above is still valid, since the left hand side of~\eqn{wanted} is now $(\wO(1))^k(k/n)^{\Omega(d^{-2}\log^K n)}= o(n^{-2}e^{-d^2})$ as required. Thus~\eqn{firstg} becomes
$$
g(r)\le o(1)+\frac{2+2\eps}{3}  (1+ 1/3)\le \frac{8(1+\eps)}{9}+o(1)
$$
as required.

We are left finally with the case $r_0=2$. Since $K=17$,  this was actually covered by the first version of the argument above, after reducing $K$ by 4 so that this case appears as $r_0=1$ (since $d\le \log^4 n$).  This completes the proof of part (ii) of the lemma. \proofend 

 
\smallskip
 
\noindent
{\em Proof of  Lemma~\ref{l:basics}(iii)}.
The proof has four ``phases.'' These refer to various parts of the exposure process. Let $v\in V(G)$, and for $i\ge 0$ let  $L_i$ for brevity  denote $S(v,i)$, the vertices at distance $i$ from $v$. 
\medskip

\noindent
{\em Phase 1: run the exposure process for $t'$ rounds}
\smallskip

\noindent
Consider running the exposure process in (i) with $V'=\{v\}$, extended until the first complete round in which the total number $n'$ of vertices that have been encountered is at least $ n/d^3\log^2 n.$ (Note that there is nothing to prove in part (iii) if the graph is disconnected. Almost all of the rest of our proof is written as if  $G$ were connected and in particular such a round is actually reached. We do not wish to condition on this event, as that would alter the probabilities of various events in the exploration process. In all our remaining discussion of the exploration process, we implicitly assume that  the process  has not already reached a conclusion by exposing  a component of $G$ prematurely. Inserting such qualifications and appropriate modifications explicitly at each point is routine but would complicate the exposition considerably, so we omit them for clarity.) Let $t'$ be the index of this round, so the set $L_{t'}$ of vertices at distance $t'$ from $v$ has just been revealed.  Note also for later use that, since successive neighbourhoods grow by a factor at most $d$ each time,  we have 
\bel{nsize}
 n/d^3\log^2 n\le n'\le n/d^2\log^2 n, \qquad d^{t'} \ge n/\wO(1).
\ee
The expected number of loops encountered so far is  $O(dn'/n)$, since there have been at most $dn'$ pairs exposed and each one (conditional on the history of the process) has probability $O(1/n)$ of creating a loop. (Note that the number of unmatched points remains asymptotic to $n(d+1)$ throughout.) Similarly the expected number of multiple edges is $O(d^2n'/n)$. Hence, letting $G_0$ be the graph induced by the pairs exposed by the process up to this point, we have that     $G_0$  is simple with probability $1-o(1)$, and the convergence in $o(1)$ is uniform over all $v$. (By this, we mean there exists $g(n)=o(1)$ such that the probability is at least $1-g(n)$ for all $v\in V$.)

We aim to show that with probability $1-o(n^{-3})$ either $G$ is disconnected or some other properties of $G_0$ hold; after this, we will use an argument similar to that in part (i) to deduce that the same claim is true conditional on $G_0$ being simple. We avoid mentioning  the disconnected case henceforth, as explained above. Let $U_0=\bigcup_{u\in S(v,r)} S(u,r')$. Our strategy for proving accessibility will   first be to find, with probability  $1-o(n^{-3})$, some large disjoint sets $W_0(w)\subseteq V(G_0)$ for almost all vertices in $U_0$. These sets will then be ``grown'' outside $G_0$ where necessary to form the larger sets $W(w)$, for each $w\in U$.  The probability they cannot be grown with their desired properties, for any particular $U\subseteq U_0$, will be so small that a union bound over all $U\subseteq U_0$ will yield the desired bound on the probability that the required sets $W(w)$ exist. (The sets $W_0(w)$ for $w\notin U$ will not be used; this means a  loss of $o(n)$ vertices from those available to build trees outside $G_0$, but this creates no problem for the argument.) 

\medskip

\noindent {\em Phase 2: re-examine the process to round $r+r'+t$}
\smallskip

Note that each vertex $w\in U_0$ was reached in the exposure process by the time of completion of round $r+r'$. We  ``rewind'' the process to the end of round $r+r'+t$, where $t=\lfloor \log_d(\log^{12} n)\rfloor$.   Recalling $d\le \log^4n$, we note for later use that $d^t\ge \log^8 n$. After $r+r'+t$ rounds, using~\eqn{treesize} there are $O(d^{r+r'+t}) =  O(n^{1/2+2\delta}  \log^{12} n )$ distinct vertices containing exposed points. The number of pairs exposed is at most $d$ times this, and the probability that a given pair exposed up to this point is bad (as defined in the proof of part (i) of this lemma) is $O(n^{-1/2+2\delta}  \log^{12} n)$. So, the expected number of bad pairs is $O(dn^{ 4\delta} \log^{24} n)$. Thus, employing the argument in the proof of (i), and in particular stochastically bounding the number of bad pairs by a binomial random variable, we deduce that, with probability $1-o(n^{-3})$,  the total number of bad pairs encountered in $r+r'+t$ rounds   in the process starting at $v$ is $O(dn^{ 4\delta} \log^{24} n)=o(n^{5\delta} /d\log^{12} n)$. Here we use a consequence of Chernoff's bound (see e.g.~\cite[Corollary~2.3]{JLR}) to achieve the sharp tail estimate: if $X$ is distributed as ${\rm Bin}(n,p)$, then
\bel{chern}
\Prob( |X-\E X| \ge \eps \E X) ) \le 2\exp \left( - \frac {\eps^2 \E X}{3} \right)  
\ee
for  $0 < \eps < 3/2$.  

For each vertex  $w\in U_0$, let $\tau =\tau(w)$ denote the distance from $v$ to $w$; i.e.\ $\tau$ is such that $w\in L_{\tau }$. Set $L_0(w) =\{w\}$, and inductively for each $i\ge 0$ denote the  set of vertices in  $L_{\tau+i+1}$ adjacent to vertices in $L_i(w)$ by $L_{i+1}(w)$. Then set $T_j(w)= \bigcup_{i=0}^ {j}   L_i(w)$. The exposure process has by now revealed (amongst other things), for each $w\in U_0$, the set $T_{t+1}(w)$ of vertices, with  $t$   defined as above. We now place  each such  $w$ into the set $Q$  if and only if any vertex in $T_{t}(w)$ is incident with a bad pair. Then, from the observation above bounding the total number of bad pairs,
\bel{Qsize}
|Q|=o(n^{5\delta} )
\ee
with probability $1-o(n^{-3})$. (An extra factor $O(d\log^{12} n)$ was allowed for, as a crude upper bound on the number of vertices in $T_{t}(w)$, and hence on the number of vertices $w$ that each bad pair can eliminate.) 

If $w\notin Q$, it follows that   $T_{t}(w)$ induces a ``regular'' tree, which we denote by $T(w)$, with each vertex adjacent to precisely $d$ in the next level, apart from the leaves. Moreover, $T(w_1)$ and $T(w_2)$ must be  vertex-disjoint for any two distinct vertices $w_1$ and $w_2$ both in the same level set $ L_i$.  (They are not always disjoint if they come from different levels, since for example it is possible that $w_2\in L_2(w_1)$.)  

Note that $ L_{r+r'}\subseteq U_0$, but that other vertices of $U_0$  are scattered at various distances from $v$. We first   consider  $\hatU_0:=    L_{r+r'}\setminus Q$.   We have from above that the trees $T(w): w\in \hatU_0$ are pairwise disjoint regular trees. These trees are all based at the same level, which simplifies the presentation of our analytic arguments. See Figure~\ref{f:shells}(a).
\begin{figure}[h]
\begin{center}
\includegraphics[width=15cm]{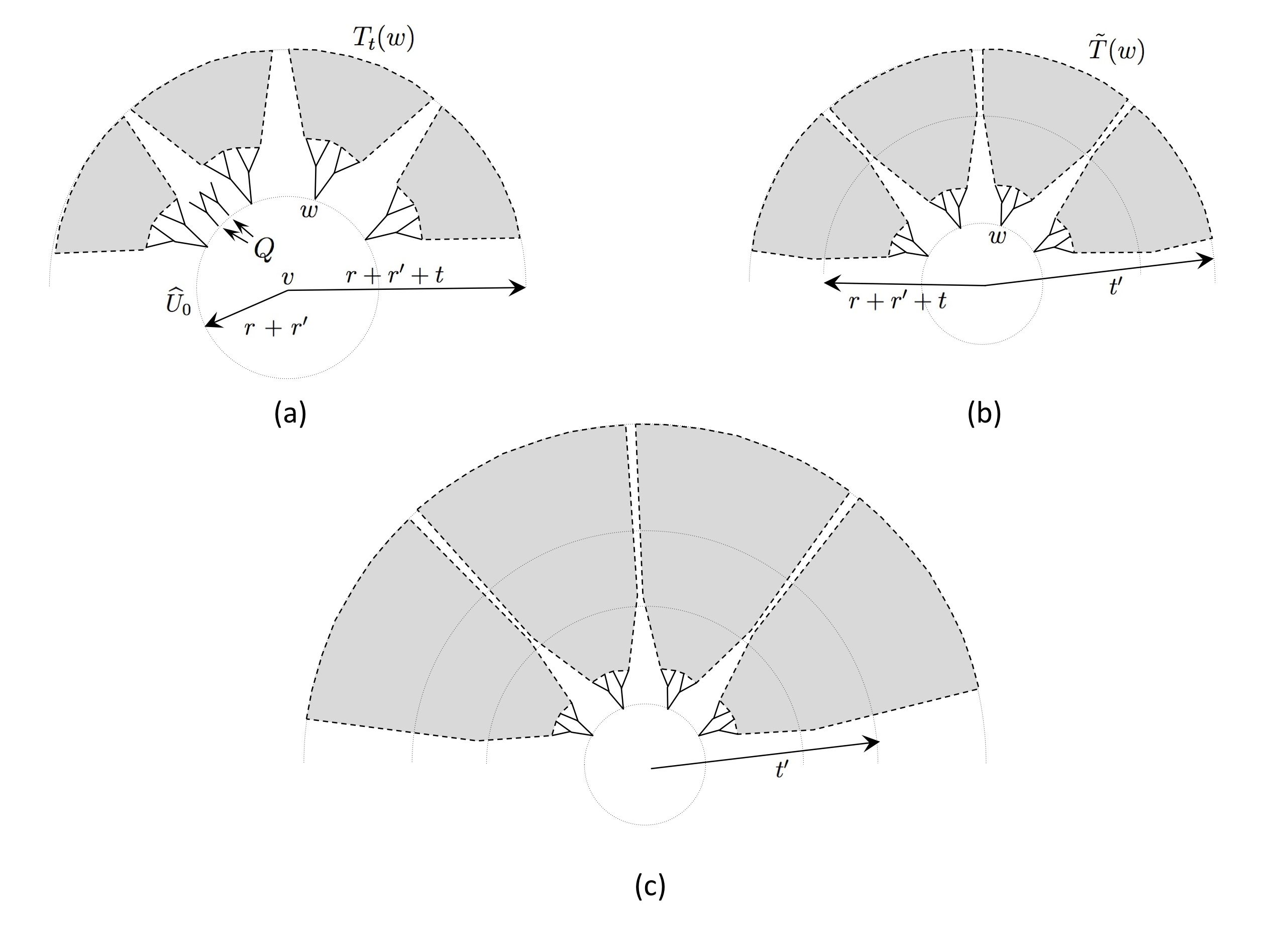}
\end{center}
\vspace{-5mm}
\caption{\it The trees growing from $\hatU_0$ are disjoint and (a) fully $d$-ary in Phase 2, (b) nearly $d$-ary in Phase 3, and (c) a constant fraction of the size of $d$-ary trees in Phase 4.
}
\label{f:shells}
\end{figure}

\medskip

\noindent{\em Phase 3: re-examine  from round $r+r'+t$ to round $t'$.}

\smallskip

\noindent We next re-examine the exposure process from level $r+r'+t$, and extend each induced tree $T(w)$ ($w\in\hatU_0$)  into a tree $\tilde T(w)$ that reaches ``up'' as far as vertices in  $L_{t'}$ (defined at the start of the proof of (iii), when $n'$ vertices are reached). This is done in a breadth-first manner, adding one level to all of the trees  before continuing to the next level. With each level, we expose all unexposed points in each vertex $u$ in  that level, and add each exposed edge  to the tree containing $u$ if the pair is not bad and if there is such a tree. The vertices which were reached only through vertices in $Q\cap L_{r+r'}$ do not belong to trees, but are still re-exposed.

At the time when all points in vertices in $L_{i-1}(w)$ have been exposed for all $w\in \hatU_0$, we say that stage $i$ is complete. At this point all vertices in $L_{i}(w)$ have been determined. This terminology is simply a re-indexed way to refer to round  $r+r'+i$. The trees $\tilde T(w)$ are fully grown when stage $t'-r-r'-1$ is complete. As a terminological note, we will sometimes refer to $L_{i}(w)$ as the set of vertices of  $\tilde T(w)$ at {\em height} $i$. The height of the tree is the maximum height of its vertices.
 
The next part of the argument will also apply to Phase 4. When a point $u$ is chosen for exposure and the pair $\{u, u'\}$ is  exposed, we say $u$ is the {\em primary} point of the pair and $u'$ the {\em secondary} point.  

Focus on what happens to just one of the trees,  $\tilde T(w)$, during stage $i+1$. Of the points in bad pairs exposed during this stage that are in vertices of $\tilde T(w)$ (whether at height $i$ or height $i+1$), let $X_i(w)$ denote the number of secondary points and $Y_i(w)$ the number of primary points. Suppose that $\Aconst$ is a deterministic upper bound on the total number of unexposed points in $\tilde T(w)$ at any time during the stage, and that there are deterministically at least $\xi n$ unexposed vertices at the end of the stage. Then, for each point exposed during this stage, the probability of creating a bad pair with a secondary point in $\tilde T(w)$ is at most $ \Aconst/\xi dn$ (here and in similar places, we could write $d+1$ instead of $d$ if we wished). If $\Bconst$ is a deterministic upper bound on the number of pairs exposed in total during this stage, we can stochastically bound $X_i(w)$  with a binomial random variable $\Bin( \Bconst ,  \Aconst/\xi dn)$, and apply Chernoff's bound~(\ref{chern}). 
This yields for any $\eps<3/2$
\bel{Xbound}
\pr\big(X_i(w)\ge (1+\eps)\Aconst \Bconst /\xi dn\big) \le 2e^{-\eps^2 \Aconst \Bconst /3\xi dn}.
\ee
Let $\Bconst '$ be an upper bound on the number of primary points to be  selected during this stage from vertices in $\tilde T(w)$, and $\Aconst '$ a deterministic upper bound on the total number of unexposed points that can lie in exposed vertices at any time during the stage. We will stop the process as soon as there are less than $\xi n $ unexposed vertices, and hence there are at least $\xi dn$ unexposed points within unexposed vertices.  Thus, the probability of a pair exposed during this stage being bad is at most $\Aconst '/\xi dn$, and we can stochastically bound $Y_i(w)$  with a binomial random variable $\Bin( \Bconst ', \Aconst '/\xi dn)$, yielding
\bel{Ybound}
\pr\big(Y_i(w)\ge (1+\eps)\Aconst '\Bconst '/\xi dn \big) \le 2e^{-\eps^2\Aconst '\Bconst '/3\xi dn}.
\ee
 
We return to focus on Phase 3.  We will show by induction on $i\ge t$ that with probability   $1-o(i/n^4)$, at the completion of stage $i$ each of the trees   $\tilde T(w)$ contains  at least a $(1-\eps_i)$ fraction of its maximum possible number $d^{i+1}$ of unexposed points in vertices in $L_i(w)$, where $\eps_i$ is to be defined below  but in any case will be at most $1/2$ (in fact, $\eps_i=o(1)$). From the way that $\hatU_0$ was defined so as to exclude $Q$, we know that $\tilde T(w)$ contains all $d^t$ vertices in $L_t(w)$, each containing $d$ unexposed points at the completion of stage $t$, so we may put $\eps_t=0$. 

For $i\ge t$, consider stage $i+1$. Here  $ d^{i+1}\ge d^{t+1}\ge  d\log^8n$ as noted above.  Deterministically, there are at most $\Aconst :=d^{i+2}$ points unexposed in $\tilde T(w)$ at any time. The number of points exposed in this stage is at most $ dn'\le  \Bconst := n/d\log^2 n$. So by~\eqn{Xbound} with $\xi=1/2$ and $\eps=1$,  the probability that  $X_i(w)\ge 4d^{i+2}/d^2\log^2 n$ is $\exp(-\Omega(d^i/\log^2 n ))= \exp(-\Omega(\log^6n)) = o(1/n^5)$. On the other hand, we can set $\Bconst '=d^{i+1}$ and $\Aconst '=  n/d\log^2 n$ to obtain a similar bound on the probability that   $Y_i(w)\ge   4d^{i+1}/d^2\log^2 n$ via~\eqn{Ybound}. The number of bad pairs generated in stage $i$ involving points in $\tilde T(w)$ is at most $X_i(w)+Y_i(w)$. This  bound therefore holds,  for all trees $\tilde T(w)$, with probability $1-o(n^{-4})$. All non-bad pairs  are added to the trees. By induction, there were at least $(1-\eps_{i})d^{i+1}$ points waiting to be exposed in $\tilde T(w)$ in stage $i+1$, so in this case the number of vertices  added to $\tilde T(w)$ at height $i+1$ is at least  
$$
(1-\eps_{i}) d^{i+1} - X_i(w) - Y_i(w)  \ge    (1-\eps_{i}- 6/d\log^2 n )d^{i+1}.
 $$
The number of unexposed points is at least $d$ times this quantity, minus $X_i(w)$. Hence with $\eps_i$ defined as say  $4 (i-t)/\log^2 n$,  the inductive claim is established.
 
Setting $i=\min\{t'-r-r', 2\log_d n\}$, we obtain $\eps_i=o(1)$, and it is easily seen to follow from this that $i=t'-r-r'$. Thus, with probability $1-o(n^{-3})$,  the trees $\tilde T(w) $  can be defined for all $w\in \hatU_0$, so that they have the following property. See Figure~\ref{f:shells}(b).

\begin{property}\lab{obs} The trees $\tilde T(w_1) $  and $\tilde T(w_2) $  are pairwise disjoint for $w_1,\, w_2\in \hatU_0$. Furthermore, each tree $\tilde T(w) $ contains   $d^{t'-r-r'} (1-o(1))$   vertices at the top level, i.e.\ in $L_{t'}$, and  $d^{t'-r-r'+1} (1-o(1))$ unexposed points in these vertices. 
\end{property}

This holds in the pairing model, but we can now argue as in the proof of (i), as explained above at the start of the proof of (iii), that the same observation holds with probability $1-o(1/n^3)$ conditional upon $G_0$ being a simple graph. Note that the re-run process   has re-exposed all vertices of $G_0$, but the trees would not contain many vertices of  $G_0$ that lie ``above'' vertices in $Q$.

\medskip
\noindent{\em Phase 4: the exposure process after round $t'$}
\smallskip

\noindent
 We can now condition on the so-far-exposed subgraph $G_0$ of $G$ satisfying the event shown in the observation.  Let  $A\subseteq S(v,r)$ and  $U=\bigcup_{u\in A} S(u,r')$ with $|A|> n^{1/4-\delta}$ and $ d^{r+r'}< n/\gammaconst|U|$. Since there are at most $2^{n^{1/4+\delta}}$ choices of the set $A$ and $n$ choices of $w$, we are done by the union bound once we  show that the probability that any particular one of the sets $A$, the required sets $W(w)$ exist with the desired properties with probability $1-o(2^{-n^{1/4+\delta}}/n^4)$. 
 
To introduce the argument in a simple  setting, we essentially treat the case  $U\setminus Q\subseteq \hatU_0$ first. We will next grow the trees $\tilde T(w) $ to height $r+r'+1$. See Figure~\ref{f:shells}(c). For each $w$, the set $W(w)$ will be chosen from the vertices of $\tilde T(w)$ at maximal distance from $w$. (For the later case when $U\setminus Q\not \subseteq \hatU_0$, the trees will be grown to different heights, and $W(w)$ may contain vertices of several trees  $\tilde T(w')$  where $w'\in \hatU_0$, at a level that lies within $N(w,r+r'+1)$ as required.)

\begin{sublemma}\lab{newclm}
Condition on  $G_0$ having  trees $\tilde T(w) $ for all $w\in \hatU_0$ with Property~\ref{obs}. Suppose that $U^*\subseteq \hatU_0$ with $|U^*|<   n/\gammaconst d^{r+r'}$. With probability  $1-  o(2^{-n^{1/4+\delta}}/n^4)$,  we can find a family of pairwise disjoint trees $\{\tilde T(w):w\in U^*\}$ in $G$ such that for  $w\in U^*$ and $i\le r+r'+1$ the number of vertices in $\tilde T(w)$ at height $i$  is at least
$$
\frac25\min\left\{ d^{i},\frac{n}{\accessconst |U^*|}\right\}.
$$
\end{sublemma}
\begin{proof}
Conditioning on the subgraph $G_0$ leaves the remaining part of $G$ to be a graph in which the vertices in $L_{t'}$ have given degrees, at most $d$, those in $L_i$ where $i<t'$ have degree 0, and all others have degree $d+1$. Again we consider the pairing model for this graph, which can be regarded as a subset of $\P_{n,d+1}$ in the obvious manner.  

For simplicity and later reference, let $\chi$ denote $|U^*|$. We will continue by generating extensions of all the trees $\tilde T(w) $ for every $w\in U^*$, up to $L_{r+r'}(w)$, and then continue with the next level as well, or as much of it is required. In all cases, we will stop the process before the number of as-yet-unexposed vertices drops below $n/2$.  Since $|L_i(w)| \le d^{i}$ and $d\ge 2$, the total number of vertices added to all the trees from  stage 1 to the end of stage $r+r'$ is at most $2d^{r+r'}\chi < 2n/\gammaconst$ by the hypothesis of the sub-lemma. Additionally, the number of vertices already found in the first $r+r'$ steps of the exploration process from $v$ is $o(n)$.  Thus, the process continues at least  to the end of stage $r+r'$. In case stage $r+r'+1$ is only partially used, we will desire the numbers of vertices at height $r+r'+1$ in different trees to be roughly equal (with high probability). This is  achieved by exposing the same number of points in each tree. Note that in this phase of the exploration, we {\em only} explore the vertices in the trees $\tilde T(w)$ and their extensions, and do not start explorations from the other vertices of $G_0$. 

Define $i_0:=t'-r-r'$. For $w\in U^*$,  let  $n_{i_0}(w)$ denote the number of  vertices in the tree $\tilde T(w)$  at the top level (before generating extensions), i.e.\ at height $i_0$. From Property~\ref{obs},  $n_{i_0}(w) = (1+o(1)) d^{i_0}= (1+o(1)) d^{t'-r-r'}$, and the number of points in vertices at this level of the tree, say $N_{i_0}(w)$, is asymptotic to $ dn_{i_0}(w)$. Hence, using~\eqn{nsize} and the bounds on $r$ and $r'$ in the hypotheses of~(iii), together with the fact that $(d+1)d^{t'-1}=\Omega(n')$ (by definition of $n'$), we have
\bel{somebounds}
d^{i_0+1} = (1+o(1)) N_{i_0}(w)> n^{1/2-2\delta}/\wO(1).
\ee

Now consider stage $i+1$ where  $i\ge i_0 $, fix  $\gamma=\gammaconst$, and assume that  
\bel{icond}
 d^{i+1}< n/\gamma\chi.
\ee  
Recall that $d^{i+1}$ is an upper bound  on the number of pairs in $\tilde T(w)$ exposed in stage $i+1$, and hence also on the number of  vertices in $\tilde T(w)$ at height $i+1$. Hence, at the end of this stage,   the total number of exposed vertices is at most $o(n)$ (for vertices exposed by stage $i_0$) plus  $2n/\gamma$ (summing the geometric series using $d\ge 2$ and multiplying by the number of trees,  $\chi$). Thus we can take any fixed $\xi <1-2/\gamma$   in~\eqn{Xbound}, with fixed $1>\eps>0$, $\Aconst =d^{i+2}$ and $\Bconst =   d^{i+1}\chi'$ where $\chi'=\max\{n^{1/2-2\delta-\eps},\chi\}$, giving  
$$
\pr\big(X_i(w)\ge (1+\eps)d^{2i+2}\chi'/\xi n\big) \le 2\exp(- \eps^2d^{2i+2}\chi'/3\xi n).
$$
From~\eqn{somebounds}, $d^{i} >d^{i_0} >n^{1/2-2\delta}/\wO(1)$, and so  we obtain  
$$
\pr\big(X_i(w)\ge (1+\eps)d^{2i+2}\chi'/\xi n\big) = o\big(\exp(- n^{1/2-6\delta- 2 \eps}) \big) = o\big(\exp(- n^{1/4+2\delta})\big) 
$$
for (as we may assume) $2 \eps< \frac{1}{4}-8\delta$. (Actually, there are fewer secondary points expected at height $i$ than height $i+1$, which we could take advantage of to slightly improve the constants in this proof and/or the definitions. Ultimately, when we apply this lemma, such an improvement  would only reduce the number of cops required by a constant factor, at the expense of slightly complicating the argument.) 

To apply~\eqn{Ybound} we may once again set  $\Bconst '=d^{i+1}$, and this time $\Aconst '= d^{i+2}\chi'$, which gives exactly the same bounds on $Y_i(w)$ as $X_i(w)$. Since the ``failure'' probability in the sub-lemma statement is $o(2^{-n^{1/4+\delta}}/n^4)$, we may now assume that both $X_i(w)$ and $Y_i(w)$ are less than $ (1+\eps)d^{2i+2}\chi'/\xi  n$, for all $i<\log_d n$ say, as well as all   $w\in U^*$, by applying the union bound.

In general let $n_i(w)$ denote    the number of vertices in $\tilde T(w)$ at height $i$, and $N_i(w)$ the number of unexposed points in these vertices at the end of stage $i$. Then $n_{i+1}(w)$  is at least $ N_{i }(w) -X_i(w)-Y_i(w)$, since this is a lower bound on the number of points exposed in good pairs in stage $i+1$. Thus, we may assume that 
\bel{nbounds}
N_{i}(w) - \delta_i \le n_{i+1}(w)\le N_{i }(w) 
\ee
where $\delta_i = 2(1+\eps)  d^{2i+2} \chi'/\xi n$. Similarly, the number of unexposed points at the end of stage $i+1$ would be $dN_i(w)$ if no bad pair occurred, and this is reduced by at most $d$ for each point counted by $X_i(w)$ or $Y_i(w)$, so 
$$
d\big(N_{i}(w)  - \delta_i\big)\le N_{i+1}(w)\le dN_{i }(w).
$$
From the upper bound  $\chi < d^{-r-r'}n/\gamma$ assumed in the sub-lemma,~\eqn{icond} holds for $i\le r+r'-1$, and hence   these conclusions apply for such $i$. Iterating, we obtain
$$
N_{r+r'}(w)\ge d^{r+r'-i_0}N_{i_0}(w)-\sum_{j=1}^{r+r'} d^{j}\delta_{r+r'-j}.
$$
The assumptions in the lemma, that $(d+1)d^r<n^{1/4+\delta}$ and similarly for $r'$, imply that  $n^{1/2-2\delta-\eps} < d^{-r-r'}n/\gamma$ ($n$ large), and thus  $\chi' < d^{-r-r'}n/\gamma$  as for $\chi$. Hence  we may assume 
$$
\delta_i< 2(1+\eps)d^{2i+2-r-r'}/\xi\gamma
$$
and consequently
\begin{eqnarray*}
N_{r+r'-1}(w)&\ge&  d^{r+r'-i_0}N_{i_0}(w)-d^{r+r'+1}\sum_{j=0}^{\infty}2(1+\eps)d^{-j}/\xi\gamma\\
&\ge&  d^{r+r'+1}\left(d^{-i_0-1}N_{i_0}(w)- 4(1+\eps)/\xi\gamma\right).
\end{eqnarray*}
Since this is valid for any $\xi <1-2/\gamma$ and sufficiently small $\eps>0$, we see using~\eqn{somebounds}  that for $n$ sufficiently large
$$
N_{i}(w)>\alpha d^{i+1}
$$
for any $i\le r+r'$ and 
any fixed $\alpha$ satisfying
$$
\alpha < (\gamma-6)/(\gamma-2).
$$
A similar argument but using~\eqn{nbounds}  for the last step yields 
\bel{finaln}
n_{i}(w)>\alpha d^{i}\quad(\mbox{all }i\le r+r').
\ee
Since we  set $\gamma=\gammaconst$, we may put $\alpha=2/5<3/7$, which gives the conclusion of the sub-lemma for all $i\le r+r'$.

For $i=r+r'+1$, consider two cases. Firstly, if $d^{r+r'+1}< n/\gamma \chi$, then the above argument applies equally well to one more level of the trees, i.e.\ reaching height $r+r'+1$, and we have the analogue of~\eqn{finaln}: $n_{r+r'+1}(w)\ge  \alpha d^{r+r'+1}$, as required.
  
On the other hand, if  $d^{r+r'+1 }\ge n/\gamma \chi$, then we can instead build to height $r+r'+1$ only partially, building the trees in a balanced fashion as described above, exposing precisely $\lfloor \beta n/\chi\rfloor$ points at height $r+r'$ of each tree for some $\beta>0$ to be chosen below. Letting
$$
\xi = 1-\beta-1/\gamma-\eps
$$
we can set $a=dn(1-\xi)/\chi$, $b=\beta n$, $b'=\beta n/\chi$, $a'=dn(1-\xi)$ for use in~\eqn{Xbound} and~\eqn{Ybound}. Here $ab/dn=\Omega(n^{1/2-2\delta})$, and by the argument as before  we may assume that 
$$
X_{r+r'}(w)+Y_{r+r'}(w) <\frac{ 2(1+\eps)n\beta(1-\xi)}{\chi\xi}.
$$
Thus, asymptotically at least $n\rho/\chi$ vertices are added to the next layer of each tree, for any 
$$
\rho <  \beta\left(1-\frac {2(\beta+1/\gamma)}{1-\beta-1/\gamma}\right).
$$
Setting  $ \beta = 1/\gammaconst$ and recalling $\gamma=\gammaconst$, this condition says $\rho< (3/7)(1/9)$, so we can take $\rho = (2/5)(1/9)$ and again we have the sub-lemma.
 \end{proof}

First suppose that $U^*$ happens to equal $U\setminus Q$. We have $|U^*| \le |U|<  n/\gammaconst d^{r+r'}$ by the hypothesis of~(iii). Hence, the sub-lemma implies the existence of trees $\tilde T(w)$ for each $w\in  U\setminus Q$. For each $w\in U^*$, the set $W(w)$ can now be chosen from vertices of the tree $\tilde T(w) $ at height $r+r'+1$. The lemma follows in this case.
  
To complete the proof of the lemma, we   need to cope with the fact that some  vertices of $U$ can lie at distances less than   $r+r'$ from $v$, i.e.\ in $L_{r+r'-j}$ for some $j>0$.  To do this we will need need to note that the sub-lemma can easily be extended to a version in which   the trees are not all of the same height.

For $j\ge 0$ let $R_j= L_{r+r'-j}\cap U\setminus Q$. Firstly, the argument leading up to Property~\ref{obs} is easily adapted to show that for given $j$, with probability at least   $1-n^{-4}$, we can grow disjoint trees $T(w)$ from all vertices $w\in R_j$ level by level up to level $ {r+r'}$ so that each contains asymptotically $d^{j}$ vertices in  $L_{r+r'}$, and contains no vertex in $Q$. Let $F(w)$ denote the set of vertices in $V(T(w))\cap L_{r+r'}$.
  
Let $\tildeU$ denote the union of the sets $F(w)$ over all $w\in U$. Let   $\tildeU_j$ be the subset of $\tildeU$ containing those vertices $w$ for which   $j$ is the minimum value such that $w\in F(w')$ for some $w'\in R_j \subseteq L_{r+r'-j}$. We can condition on  $G_0$ having  trees $\tilde T(w) $ for all $w\in \hatU_0$ with Property~\ref{obs}. Then these are disjoint trees based on all vertices in $\tildeU$, each of  height $t'-r-r'$. 
 
We will show the existence (with the required probability) of disjoint extensions of the trees $\tilde T(w)$ for each $w\in \tildeU$ such that for $w\in \tildeU_j$ the height of $\tilde T(w)$  is  $r+r'+1-j$, and the number of vertices at height $i$  is at least 
\bel{newlb}
 \frac25\min\left\{  d^{i},\frac{nd^{-j}}{\accessconst |U|}\right\}
 \ee
for all $1\le i\le r+r'+1-j$. So actually if $w\in \tildeU_j$ for  $r+r'+1-j\le t'-r-r'$, the tree $\tilde T(w)$ does not need to be grown any further than its present height. Hence, we may assume that $r+r'+1-j>  t'-r-r'$, i.e.\ $j\le j_{\max}:= 2r+2r'-t'$.

The existence of these trees lets us find the required sets $W(w)$ treating the $w$'s level by level. For those $w\in R_{0}$ we use $W(w)$ as defined in the simple case discussed above. In general,  for those $w\in R_j$, $W(w)$ is the set of all vertices at height $r+r'+1-j$ in all the trees $\tilde T(v')$ for $v'\in F(w)$. Since the trees have disjoint level sets, $W(w_1)$ and $W(w_2)$ are disjoint whenever $w_1$ and $w_2$ are not in the same set $R_j$, whilst if they are in the same $R_j$, we have $F(w_1)\cap F(w_2)=\emptyset$ and thus $W(w_1)$ and $W(w_2)$ are again disjoint. Of course, the size of $W(w)$ is then, given the bound~\eqn{newlb}, at least  
$$
|F(w)| \frac25 d^{-j}\min\left\{  d^{r+r'+1 },\frac{n}{\accessconst |U|}\right\}.
$$

Since $|F(w)| \sim d^j$, this proves  part (iii) of the lemma provided that the required extensions of   the trees $\tilde T(w)$ exist. To show these exist, we use a quite simple adaptation of the proof of Sub-lemma~\ref{newclm}.   For  $w\in \tildeU_j$ the present height of $\tilde T(w)$ is $t'-r-r'$, and we begin by pruning off the top $j$ layers of this tree. The conditioning we assume is now on $G_0$ with all these pruned vertices removed. After all the prunings, each tree needs to grow an extra $r+r'+1-(t'-r-r')$ layers, just as for the proof of Sub-lemma~\ref{newclm}. For this new version of phase 4, we re-define stage $i$ to refer to the exposure of all points in vertices at height $i-1-j$ in   trees $\tilde T(w)$ with $w \in \tilde U_j$, for any  $j\le j_{\max}$. Thus, stage $t'+1$ adds one more layer to all the current trees. Note that we now re-expose the parts of $G_0$ that were pruned, but this part of the graph is not being conditioned upon so the remaining part of the graph can be regarded as random.  

The main difference from the situation in the sub-lemma is that the trees are not now all of the same   sizes (approximately) at a given level. Now, for $w\in \tildeU_j$, the number of vertices of $\tilde T(w) $ at height   $i_0 =t'-r-r'$ is asymptotic to $d^{i_0-j}$. So, for the present context, we let $\chi= \sum_{j=0}^{j_{\max}}d^{-j}|\tildeU_j|$, and hence the number of vertices currently (at the end of stage $t'$) at the maximum height, in total   among all the trees, is asymptotic to  $d^{i_0}\chi$.  Since $j\le j_{\max}$, we have  $d^{i_0-j}>n^{1/2-6\delta}/\wO(1)$, similar to~\eqn{somebounds}. Thus, assuming~\eqn{icond}, basically the proof of Sub-lemma~\ref{newclm} still applies to show that~\eqn{nbounds} can still be assumed, with $\chi'$ defined using the same formula  as before but using the  new definition of $\chi$, and $\delta_i = 2(1+\eps)  d^{2i+2-j} \chi'/\xi n$. Eventually we have $n_{i-j}(w)>\alpha d^{i-j}$ for all $i\le r+r'$ in place of~\eqn{finaln} (with all but negligible probability).

As in the proof of Sub-lemma~\ref{newclm}, for the case $d^{r+r'+1 }< n/\gamma \chi$ we obtain~\eqn{newlb} for the top level of the trees, as required. For the other case, as before we can use a partial extension of all the trees, the size of the extension to each tree relating to $ \tildeU_j$ being  proportional to $d^{-j}$, and obtain the same result.
\end{proof}

\end{document}